\documentclass[smallcondensed]{svjour3}       

\smartqed 

\usepackage{algorithm}
\usepackage{algorithmicx}
\usepackage{amsmath}
\usepackage{booktabs}
\usepackage{bm}
\usepackage{dsfont}
\usepackage{graphicx}
\usepackage{float}
\usepackage{amssymb}
\usepackage{mathtools}
\usepackage{enumerate}
\usepackage{mathdots}
\usepackage{mathrsfs}
\usepackage{multirow}
\usepackage[table]{xcolor}
\usepackage{tabularx}
\usepackage{url}
\usepackage{wasysym}

\newcommand{\comp}{\mathsf{c}}

\newcommand{\alphaMax}{\overline{\alpha}}
\newcommand{\Ftail}{\mkern 1mu\overline{\mkern-1mu{F}\mkern+1.25mu}}

\newcommand{\independent}{\protect\mathpalette{\protect\independenT}{\perp}}
\def\independenT#1#2{\mathrel{\rlap{$#1#2$}\mkern2mu{#1#2}}}

\newcommand{\bfone}{\bm{1}}
\newcommand{\bfzero}{\bm{0}}
\newcommand{\bfmu}{\bm\mu}
\newcommand{\bfSigma}{\bm{\Sigma}}

\newcommand{\bfC}{\bm{C}}
\newcommand{\bfI}{\bm{I}}

\newcommand{\bfX}{\bm{X}}
\newcommand{\bfY}{\bm{Y}}

\newcommand{\bfx}{\bm{x}}

\newcommand{\bft}{\bm{t}}

\DeclareMathOperator{\Prob}{{\mathbb P}}

\renewcommand{\ast}{\top}

\DeclareMathOperator{\Exp}{\mathbb E}
\DeclareMathOperator{\Var}{\mathbb{V}\mathrm{ar}}
\DeclareMathOperator{\Q}{\mathbb Q}

\newcommand{\Corr}{{\mathbb{C}\mathrm{orr}}\,}
\newcommand{\Ind}{\mathds{1}}
\newcommand{\RL}{{\mathbb R}}
\newcommand{\Nat}{{\mathbb N}}

\newcommand{\dd}{\mathop{}\mathopen{}\mathrm{d}}
\newcommand{\e}{{\mathrm{e}}}

\newcommand{\oh}{{\mathrm{o}}}
\newcommand{\Oh}{{\mathcal{O}}}

\newcommand{\eqdistr}{\stackrel{{\scriptstyle \mathcal{D}}}{=}}

\newcommand{\LaplaceDist}{\mathcal{L}}
\newcommand{\NormDist}{\mathcal{N}}
\newcommand{\ExpDist}{\mathcal{E}}
\newcommand{\EllDist}{\mathcal{ELL}}
\newcommand{\IGDist}{\mathcal{IG}}

\renewcommand{\epsilon}{\varepsilon}
\newcommand{\for}[1]{\,,\qquad \text{for } #1}

\newtheorem{Theorem}{Theorem}

\newtheorem{Example}{Example}
\newtheorem{Proposition}{Proposition}
\newtheorem{Corollary}{Corollary}
\newtheorem{Definition}{Definition}
\newtheorem{Remark}{Remark}

\newcommand{\thrm}[1] {Theorem~\ref{#1}}

\mathtoolsset{centercolon}

\renewcommand{\hat}{\widehat}
\renewcommand{\phi}{\varphi}
\usepackage{subfig}
 \usepackage{colortbl}
 \definecolor{Gray}{gray}{0.9}

\newcommand{\BigBinom}[2]{\Bigl({{#1}\atop#2}\Bigr)}

\begin{document}

\title{Efficient simulation for dependent rare events}
\subtitle{with applications to extremes}
\author{Lars N{\o}rvang Andersen \and Patrick J.\ Laub \and Leonardo Rojas-Nandayapa}
\institute{Lars N{\o}rvang Andersen \at
          Aarhus University \\
          \email{larsa@math.au.dk} 
       \and
       Patrick J.\ Laub \at
       University of Queensland and Aarhus University \\
       \email{p.laub@[uq.edu.au$|$math.au.dk]}
       \and
       Leonardo Rojas-Nandayapa \at
       University of Liverpool \\
           \email{leorojas@liverpool.ac.uk}
}

\date{Received: date / Accepted: date}

\maketitle

	\begin{abstract}
	We consider the general problem of estimating probabilities which arise as
	a union of dependent events. We propose a flexible series of
	estimators for such probabilities, and describe variance reduction schemes applied
	to the proposed estimators. We derive efficiency results of the estimators in  rare-event settings, in particular those associated with extremes. Finally, we examine the performance of our estimators in a numerical example.

  \keywords{rare-event probabilities \and bounded relative error \and extremal values \and copulas}
  \subclass{65C05 \and 65C60 \and 68U20}
	\end{abstract}

\section{Introduction}

The estimators in this paper apply to quite general problems, so we will first introduce them in the framework of our main example, namely, as estimators relating to rare maxima of dependent random vectors.  
For a random vector $\bfX = (X_1, \dots, X_d)$ with maximum $M = \max_i X_i$, the first problem we consider is estimating 
\[ \alpha(\gamma) = \Prob(M > \gamma) \,. \] 
This problem has many applications in many areas, for example in actuarial science (e.g.\ default probabilities \cite{RP2}), finance (e.g.\ probability of `knock-out' in a barrier option \cite{cont2010encyclopedia}), survival analysis, reliability \cite{rausand2004system} and engineering (e.g.\ failure probability of a series circuit).

We construct estimators for this probability, which are in terms of 
\[ E(\gamma) = \sum_{i=1}^d \Ind\{X_i > \gamma\} \,,\]
the random variable which counts the number of $X_i$ which exceed $\gamma$.\footnote{We use $\Ind\{ \cdot \}$ to denote the indicator function, and $\Ind\{ \emptyset \} = 1$.} Our two main estimators in this setting are 
\begin{align} 
  \hat{\alpha}_1 \label{alpha_1}
  &= \sum_{i=1}^d \Prob(X_i > \gamma) + \frac1R \sum_{r=1}^R (1 - E_r(\gamma)) \Ind\{E_r(\gamma) \ge 2 \} \,, \text{ and} \\
  \hat{\alpha}_2  \label{alpha_2}
  &= \sum_{i=1}^d \Prob(X_i > \gamma) - \sum_{i=1}^{d-1} \sum_{j=i+1}^d \Prob(X_i > \gamma, X_j > \gamma) \\
  &\quad + \frac1R \sum_{r=1}^R \Big[1 - E_r(\gamma) + \frac{E_r(\gamma) (E_r(\gamma) -1)}{2} \Big] \Ind\{E_r(\gamma) \ge 3\} \,. \notag
\end{align}
where $R \in \Nat$ and the $E_r(\gamma)$s are derived from i.i.d.\ samples of $\bfX$.
The fact that these are unbiased estimators of $\alpha(\gamma)$ follows from
Proposition \ref{prop:means_of_binomial} below. Estimation of 
$\Prob(M > \gamma)$ is a difficult problem and treatments in the literature make distributional assumptions on $\bfX$. 
 One such example is Adler et al.\ \cite{adler2012efficient} where $\bfX$ is assumed to be multivariate normal. In this case, our estimator $\hat{\alpha}_1$, with appropriate importance sampling, is the same as one of the estimators from \cite{adler2012efficient}.
 
The next problem we consider is estimating
\[ \beta_n(\gamma) := \Exp[ Y \Ind\{ E(\gamma) \ge n\} ] \]
for $n=1,\dots,d$ and some random variable $Y$.
We do not make any assumptions of independence between the $\{X_i > \gamma\}$ events themselves or between the events and $Y$.

The subcase of $Y=1$ a.s.\ has some interesting examples:
\[ \beta_1(\gamma) = \Prob(M > \gamma) = \alpha(\gamma) \,, \quad \text{and} \quad 
 \beta_n(\gamma) = \Prob(X_{(n)} > \gamma) \]
where $X_{(1)} \ge X_{(2)} \ge \dots \ge X_{(d)}$ are the order statistics of $\bfX$. The probability of a parallel circuit failing is a simple application for $\Prob(X_{(n)} > \gamma)$. 

Our main $\beta_1$ estimator uses the fact that
\begin{align} \label{extreme-first-partition}
\{M > \gamma\}  := \bigcup_{i=1}^d \{X_i > \gamma\} = \bigcup_{i=1}^d \{X_1 \le \gamma, \dots, X_{i-1} \le \gamma, X_i > \gamma\}
\end{align}
where the events in the union on the right are disjoint.
This supplies a form of $\beta_1$ which is amenable to efficient Monte Carlo estimation:
\begin{equation} \label{extreme-beta_1}
 \beta_1=\sum_{i=1}^d \Exp [ Y \, \Ind\{X_1 \le \gamma, \dots, X_{i-1} \le \gamma \} \mid X_i > \gamma ] \Prob\left(X_i > \gamma \right) \,.
\end{equation}

As previously mentioned, while they are main example and motivation, the extremes considered so far are a very specific instance of estimators. We now
turn our attention to the general set-up treated in the paper. 

Let $A(\gamma) = \cup_{i=1}^d A_i(\gamma)$ be the union of events $A_1(\gamma),\dots,A_d(\gamma)$ for an index parameter $\gamma \in \mathbb{R}$. 
    We consider the problem of estimating $\Prob ( A(\gamma) ) $ when the events are rare, that is, $\Prob(A(\gamma)) \to 0$ as $\gamma \to \infty$.
Define
\[ \alpha(\gamma) := \Prob ( A(\gamma) ) \quad \text{and} \quad \quad E(\gamma) := \sum_{i=1}^d \Ind\{A_i(\gamma)\} \, .
\]
Note that we recover our introductory example by having 
$A_i(\gamma) = \{X_i > \gamma\}$. Aside from this example, $A(\gamma)$ is quite general (a union of arbitrary events)
and many interesting events arising in applied probability and statistics can be
formulated as a union. The quantity $\beta_n(\gamma)$ is reminiscent of \emph{expected shortfall} from risk management \cite{mcneil2015quantitative}.

Traditional Monte Carlo methods are unreliable in the rare-event setting. We will use standard techniques from the \emph{rare-event simulation methodology}, such as importance sampling for variance reduction and applicable 
measures of efficiency: \emph{bounded relative error} and \emph{logarithmic efficiency}, cf.\ \cite{asmussen2007stochastic,glasserman2003monte,rubinstein2011simulation}. The resulting estimators are among the most efficient possible under the
most general assumptions.

The paper is structured as follows. In Sections \ref{scn:alpha_ests} and \ref{scn:beta_ests} we formally introduce our estimators for $\alpha(\gamma)$ and $\beta_n(\gamma)$ respectively, 
we prove their validity, and
show how to combine them with some existing variance reduction techniques; the efficiency properties
for the general estimators are analysed in Section~\ref{scn:efficiency_results}, in addition we further investigate
the efficiency for certain important dependence structures. Finally, we evaluate the numerical performance of the 
estimators in Section~\ref{scn:numerical_results}. 

\section{Estimators of $\alpha$} \label{scn:alpha_ests}

In the following, we first explain the construction of our estimators of $\alpha$, then discuss possible variance reduction schemes. As the $\gamma$ notation can be cumbersome, we simply write $A=A(\gamma)$, $A_i = A_i(\gamma)$, $E = E(\gamma)$, $\alpha = \alpha(\gamma)$ and $\beta_n = \beta_n(\gamma)$. Similarly, we often write $\sum_i$, $\sum_{i<j}$, $\cup_i$, $\cap_i$ for $\sum_{i=1}^d$,$\sum_{1=i<j}^d$, $\cup_{i=1}^d$ and $\cap_{i=1}^d$.

\subsection{Proposed estimators of $\alpha$}

The inclusion--exclusion formula (IEF) provides a representation of $\alpha$ 
as a summation whose terms are decreasing in size. 
The formula is
\begin{equation} \label{incl_excl}
 \alpha = \Prob(A)
  =\sum_{i=1}^d  (-1)^{i+1} \sum_{|I|= i}  \Prob\Big(\bigcap_{i\in I}A_i\Big)  \,.
\end{equation}
The IEF can rarely be used as its summands are increasingly difficult to calculate numerically. 
The $\Prob(A_i)$ terms are typically known, and the $\Prob(A_i,A_j)$ terms can frequently be calculated, however the remaining higher-dimensional terms are normally intractable for numerical integration algorithms (cf.\ the \emph{curse of dimensionality} \cite[Chapter IX]{asmussen2007stochastic}). 
Truncating the summation leads to bias, and indeed by the Bonferroni inequalities we have:
\begin{align}
  \alpha &\le \sum_{i=1}^k  (-1)^{i-1} \sum_{|I|= i} \Prob\Big(\bigcap_{i\in I}A_i\Big) \quad \text{ if } 1\le k < d \text{ and } k \text{ is odd}, \label{bonf_odd} \\
  \alpha &\ge \sum_{i=1}^k  (-1)^{i-1} \sum_{|I|= i} \Prob\Big(\bigcap_{i\in I}A_i\Big) \quad \text{ if } 1<k<d \text{ and } k \text{ is even}. \label{bonf_even}
\end{align}
This higher-order intractability motivates our estimators which use the IEF rewritten in terms of $E = \sum_i \Ind\{ A_i \}$.

\begin{Proposition} \label{prop:means_of_binomial}
For $i = 1,\dots, d$,
\begin{equation} \label{summands} 
  \sum_{|I|=i} \Ind\left\{\cap_{i\in I}A_i\right\} = \BigBinom{E}{i} \Ind\{E \ge i\}  \,.
\end{equation}
\end{Proposition}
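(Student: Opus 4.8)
The plan is to prove \eqref{summands} as a pointwise identity between random variables: I will show that for each fixed outcome $\omega$ the two sides agree as functions of $\omega$, which then yields equality as random variables. The whole argument is combinatorial rather than probabilistic, since no measure-theoretic structure is needed beyond the definition of the indicators.

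First I would fix $\omega$ and introduce the (random) set of occurring events, $S = S(\omega) := \{ j \in \{1,\dots,d\} : \omega \in A_j \}$, so that by definition $E = E(\omega) = |S|$. The observation driving the argument is that, for any subset $I \subseteq \{1,\dots,d\}$, the indicator $\Ind\{\cap_{j \in I} A_j\}$ evaluates to $1$ precisely when every event indexed by $I$ occurs, i.e.\ exactly when $I \subseteq S$, and to $0$ otherwise. Consequently the left-hand side of \eqref{summands} simply counts the number of $i$-element subsets $I \subseteq \{1,\dots,d\}$ with $I \subseteq S$, which is the number of $i$-element subsets of $S$, namely $\binom{|S|}{i} = \binom{E}{i}$.

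It then remains to reconcile this count with the factor $\Ind\{E \ge i\}$ on the right. When $E \ge i$ the count is the ordinary binomial coefficient and the indicator equals $1$, so both sides agree. When $E < i$ there are no $i$-element subsets of $S$, so the left-hand side is $0$; simultaneously $\Ind\{E \ge i\} = 0$, so the right-hand side is $0$ as well, and again the two sides coincide.

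The only place requiring care — and the main (mild) obstacle — is the convention adopted for $\binom{E}{i}$ when $0 \le E < i$. Reading it as the falling-factorial polynomial $E(E-1)\cdots(E-i+1)/i!$, one of the factors in the numerator vanishes, so $\binom{E}{i} = 0$ of its own accord; the indicator $\Ind\{E \ge i\}$ is then merely making this vanishing explicit rather than altering the value. I would state this convention up front so that the identity is unambiguous. With that settled, the two sides coincide at every $\omega$, which completes the proof.
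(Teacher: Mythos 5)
Your proof is correct and is essentially the same counting argument as the paper's: the paper decomposes over the events $\{E=k\}$ for $k \ge i$ and counts $\binom{k}{i}$ occurring $i$-subsets on each, which is just the partitioned form of your pointwise observation that the left side counts the $i$-element subsets of the set of occurring events. Your explicit remark about the falling-factorial convention for $\binom{E}{i}$ when $E<i$ is a harmless (and clarifying) addition that the paper leaves implicit.
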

\begin{proof}
\begin{align*}
\sum_{|I|=i} \Ind\{ \cap_{i\in I}A_i \} =
\sum_{k=i}^{d} \sum_{|I|=i} \Ind\{ \cap_{i\in I}A_i, E = k \} = \sum_{k=i}^{d} \BigBinom{k}{i} \Ind\{E = k \} = \BigBinom{E}{i} \Ind\{E \ge i\} \,.
\end{align*} 
\qed 
\end{proof}
Taking the expectation of \eqref{summands} gives
\begin{equation*} 
 \sum_{|I|=i} \Prob\Big(\bigcap_{i\in I}A_i\Big) = \Exp\Big[ \BigBinom{E}{i} \Ind\{E \ge i\} \Big] \quad \text{ for } i=1,\dots,d \,.
\end{equation*}
So the following has mean $\alpha$, and forms the nucleus of our $\hat{\alpha}_i$ estimators:
\begin{equation} \label{estimator_incl_excl}
	\sum_{i=1}^d (-1)^{i-1} \BigBinom{E}{i} \Ind\{E\ge i\} \,.
\end{equation}

We present estimators which deterministically \emph{calculate} the first larger terms of the IEF \eqref{incl_excl} and Monte Carlo (MC) \emph{estimate} the remaining smaller terms using sample means of \eqref{summands}.
We begin by constructing the single-replicate estimator $\hat{\alpha}_1$
where the first summand is calculated and the remaining terms are estimated:
\begin{align*}
	\hat{\alpha}_1 :&= \sum_i \Prob(A_i)  + \sum_{i=2}^d \Big[ (-1)^{i-1} 
	 \BigBinom{E}{i} \Ind\{E \ge i\} \Big] \\
	&= \sum_i \Prob(A_i) 
+ (1 - E) \Ind\{E \ge 2 \} \,, \quad \text{using} \quad \sum_{k=0}^n (-1)^{k-1} \BigBinom{n}{k} = 0 \,.
\end{align*}
In identical fashion, the single-replicate estimator calculating the first two terms from the IEF is
\begin{align*}
	\hat{\alpha}_2 
	:=& \sum_i \Prob(A_i) - \sum_{i<j}\Prob(A_i, A_j) + \sum_{i=3}^d \Big[ (-1)^{i-1} \BigBinom{E}{i} \Ind\{E \ge i\} \Big] \\
	=& \sum_i \Prob(A_i) - \sum_{i<j}\Prob(A_i, A_j) + \Big[1 - E + \frac{E (E-1)}{2} \Big] \Ind\{E \ge 3\} \,.
\end{align*}
Thus, for $n \in \{1,\dots,d-1\}$,\footnote{Note that by the IEF, we have
$\hat{\alpha}_d:=\alpha$, so this possibility is ignored.}
\begin{equation} \label{hat_alpha_n}
	\hat{\alpha}_n :=
\sum_{i=1}^n (-1)^{i-1} \sum_{|I|=i} \Prob\Big( \bigcap_{i\in I} A_i \Big)
+ \Big[ \sum_{i=0}^n (-1)^{i} \BigBinom{E}{i} \Big] \Ind\{ E \ge n+1\} \,. 
\end{equation}

Thus, $\{\hat{\alpha}_1,\dots,\hat{\alpha}_{d-1}\}$ is a collection of estimators which allows the user to control the computational division of labour between numerical integration and Monte Carlo estimation. We will furthermore let
$\hat{\alpha}_0$ be the crude Monte Carlo estimator $\Ind\{E \ge 1\}$, and note that this falls under
the definition in $\eqref{hat_alpha_n}$ if we interpret the empty sum as zero.

The $\hat{\alpha}_n$ estimators are of decreasing variance in $n$, however each estimator carries the assumption that one can perform accurate numerical integration for $1$ up to $n$ dimensions. 
As numerical integration can be slow and unreliable in high dimensions we focus on $\hat{\alpha}_1$, and also show the numerical performance of $\hat{\alpha}_2$.

In practice, theses estimators will exhibit very modest improvements
when compared against their truncated IEF counterparts (i.e., the right side of \eqref{bonf_odd} and \eqref{bonf_even}).
When combined with importance sampling, as in Section~\ref{scn:IS}, the
improvement is marked. 
Furthermore, we will show that these estimators
possess desirable efficiency properties which are preserved after combining
with importance sampling.

\subsection{Discussion of $\hat{\alpha}_1$ estimator}

The estimator $\hat{\alpha}_1$ has some nice interpretations. Recall the Boole--Fr\'echet inequalities
\begin{equation} \label{boole-frechet}
	\max_i \, \Prob(A_i) \le \alpha = \Prob( A ) \le \sum_i \Prob(A_i) =: \alphaMax \,.
\end{equation}
The stochastic part of $\hat{\alpha}_1$ is an unbiased estimate of $\alphaMax - \alpha \le 0$. 
That is to say, $\hat{\alpha}_1$ MC estimates the difference between the target quantity
$\alpha$ and its upper bound given by the Boole--Fr\'echet inequalities, $\alphaMax$.
Similarly, we often have
\[ \alpha(\gamma) \sim \sum_i \Prob(A_i(\gamma)) \,,\footnote{Using the standard notation that $f(x) \sim g(x)$ means $\lim_{x \to \infty} f(x)/g(x) = 1$.}
 \]
for example when the $A_i$ exhibit a weak dependence structure.
In this case, we can say that $\hat{\alpha}_1$ MC estimates the difference between $\alpha$ and its (first-order) asymptotic expansion.

\subsection{Relation of $\hat{\alpha}_n$ estimators to control variates}

An alternative construction of $\{\hat{\alpha}_1,\dots,\hat{\alpha}_{d-1}\}$
is to add \emph{control variates} to the crude Monte Carlo estimator
$\hat{\alpha}_0$.
We begin by adding the control variate $E$ to $\hat{\alpha}_0$ with weight $\tau \in \RL$:
\[ \hat{\alpha}_1^{\tau} := \Ind\{ E \ge 1 \} - \tau \big[ E - \sum_i \Prob(A_i) \big] \,. \]
Setting $\tau=1$ means this estimator simplifies to $\hat{\alpha}_1$.
Next, we add the control variates $E$ and ${-}\frac12 E(E-1)$ to $\hat{\alpha}_0$, and setting the corresponding weights to 1 gives $\hat{\alpha}_2$. This pattern goes on.

\subsection{Combining $\hat{\alpha}_1$ with importance sampling} \label{scn:IS}

The family of estimators $\hat{\alpha}_n$ can be combined with the
variance reduction technique called \emph{importance sampling} (IS), cf.\ \cite{asmussen2007stochastic,glasserman2003monte}. Standard IS theory suggests that we should focus on IS distributions where the event of interest $A = \cup_i A_i = \{E \ge 1\}$ occurs almost surely.
A convenient way of constructing such a distribution is as a \emph{mixture distribution}. Say that we condition on $A_i$ with probability
\[ p_i := \frac{\Prob(A_i)}{ \sum_j \Prob(A_j) } = \frac{\Prob(A_i)}{ \alphaMax} \for i=1,\dots,d\,. \]
A heuristic motivation for this selection comes from a rare-event setting where the asymptotic relationship
$\Prob(A_i(\gamma), A_j(\gamma))=\oh(\Prob(A_i(\gamma)))$ often occurs for all $i\neq j$. 
In such a case 
\[ 
 \Prob\left(A_i(\gamma) \mid A(\gamma) \right) 
   = \dfrac{\Prob(A_i(\gamma))}{\sum_j \Prob(A_j(\gamma))(1 + \oh(1))} \,   \sim p_i(\gamma) \,, \quad \text{ as } \gamma \to \infty\,. \]
Now consider the measure
\[ \Q^{[1]}( \mathscr{A} ) = \sum_i p_i \Prob( \mathscr{A} \mid A_i ) \qquad \forall \mathscr{A} \in \mathcal{F} \,,  \]   
which induces the likelihood ratio of $L^{[1]} := \dd \Q^{[1]} / \dd \Prob = \alphaMax/E$. 
As
\[
  \alphaMax + (1-E)\Ind\{E \ge 2\} L^{[1]}
  = \alphaMax \Big( 1 + \frac{1-E}{E} \Big) \\
  = \frac{\alphaMax}{E} \quad \text{ under } \Q^{[1]}\, ,
\]
we can see that $\hat{\alpha}_1$ under this change of measure, with $R\in \Nat$ replicates, is
\begin{equation} \label{alpha_1_IS1}
  \hat{\alpha}_{1}^{[1]} := \frac1R \sum_{r=1}^R \frac{\alphaMax}{E_r^{[1]}} \,, 
\end{equation}
where the superscript ``$[1]$'' indicates that the $E_r^{[1]}$ are (independently) sampled under $\Q^{[1]}$. This estimator corresponds to one from the paper of Adler et al.\ \cite{adler1990introduction}, though applied in a  more general way (they consider rare maxima of normally distributed vectors). 

Continuing in the same pattern, consider the \emph{second-order} IS distributions where $\{E \geq 2\}$ occurs almost surely, to be applied to $\hat{\alpha}_2$.
Say that we choose to condition on $A_i \cap A_j$ with probability
\[ p_{ij} := \frac{\Prob(A_i,A_j)}{ \sum_{m<n} \Prob(A_m,A_n) } = \frac{\Prob(A_i,A_j)}{ q } \for 1 \le i < j \le d\,, \]
defining ${q}:= \sum_{i<j} \Prob(A_i, A_j)$. Now consider the measure
\[ \Q^{[2]}( \mathscr{A} ) = \sum_{i<j} p_{ij} \Prob( \mathscr{A} \mid A_i, A_j)  \qquad \forall \mathscr{A} \in \mathcal{F} \,, \]
which induces a likelihood ratio of
\[ L^{[2]} := \frac{\dd \Q^{[2]} }{ \dd \Prob } = \frac{ {q} }{ \sum_{i<j} \Ind\{A_iA_j\}} 
  = \frac{ {q} }{ \binom{ E }{2} } = \frac{2 {q} }{ E (E - 1) }  \,. \]
Thus, after simplifying, the estimator $\hat{\alpha}_2$ under $\Q^{[2]}$ is
\begin{align} \label{alpha_2_IS_2}
  \hat{\alpha}_{2}^{[2]} := 
  \alphaMax -  \frac{2q}{R} \sum_{r=1}^R \frac{1}{E_r^{[2]}} .
\end{align}


\begin{Remark}
As the $\Q^{[2]}$-mean of $\frac2{E}$ is less than 1, this fraction can be seen as a correction term for the two-term truncation of \eqref{incl_excl}. We know from \eqref{bonf_even} that $\alpha \ge \alphaMax - q$. 

\hfill $\Diamond$
\end{Remark}

Both of these IS algorithms have some extra requirements for their use. The first-order estimators require 
that we can simulate from $\Prob(\,\cdot \mid A_i)$ and can calculate the $\Prob(A_i)$.
The second-order estimator requires that we 
can simulate from $\Prob(\,\cdot~|~A_i, A_j)$ and that we can calculate the $\Prob(A_i)$ and $\Prob(A_i, A_j)$.
In the rare maxima case, integration routines in \textsc{Mathematica} or \textsc{Matlab} can usually calculate these probabilities; it is simulating from the conditional distributions which can be the prohibitive requirement, particularly for $\hat{\alpha}_{2}^{[2]}$. 

\section{Estimators of $\beta_n$} \label{scn:beta_ests}

Now, we turn our attention to the estimation of 
\[ \beta_n := \Exp[ Y \Ind\{ E \ge n\} ] \,. \]
We start with $\beta_1$, and rewrite the partition \eqref{extreme-first-partition} in terms of the general $A_i$:
\begin{align} \label{first-partition}
A := \bigcup_{i=1}^d A_i = A_1 \cup (A_1^\comp A_2) \cup \dots \cup (A_1^\comp \dots A_{d-1}^\comp A_d ) \,.
\end{align}
This gives us (the generalised version of \eqref{extreme-beta_1})
\begin{align*}
  \beta_1 &= \Exp[Y \mid A_1] \Prob(A_1) + \Exp[Y \Ind\{A_1\} \mid A_2] \Prob(A_2) \\
  & \qquad + \dots + \Exp[Y \Ind\{A_1^\comp \dots A_{d-1}^\comp\} \mid A_d] \Prob(A_d) \,.
\end{align*}
If we assume it is possible to sample from the $\Prob(\,\cdot \mid A_i)$ conditional distributions---the same assumption required to use the first-order IS estimator $\hat{\alpha}_{1}^{[1]}$ from Section~\ref{scn:IS}---then each of these conditional expectations can be estimated by sample means:
\begin{align} \label{est-beta_1}
	\hat{\beta}_1 := \sum_{i=1}^d \frac{\Prob(A_i)}{\lceil R/d \rceil} \sum_{r=1}^{\lceil R/d \rceil} 
	Y_{i,r} \Ind\{A_1^\comp \dots A_{i-1}^\comp\}_{i,r} \,.
\end{align}
Here, the $Y_{i,r}$ and $\Ind\{\cdot\}_{i,r}$ are sampled independently and conditional on $A_i$.
The following proposition gives the partition of the event $\{E \ge i\}$:

\begin{Proposition}\label{Prop.Partition}
 Consider a finite collection of events $\{A_1,\dots,A_d\}$ and for each subset 
 $I\subset\{1,2,\dots,d\}$ define~\footnote{Using the convention that $\cap_\emptyset=\Omega$.}
   \begin{equation*}
    B_I:=\bigcap_{j\in I} A_j,\qquad
    C_I:=\bigcap_{\substack{k\notin I,\\k<\max I}} A_k^c.
 \end{equation*}
 Then
 \begin{equation}\label{eq.partition}
  \{E\ge m\}=\bigcup_{|I|=m} B_I = \bigcup_{|I|=m} B_I C_I.
 \end{equation}
 Moreover, the collection of sets $\{B_IC_I:|I|=m\}$ is disjoint.
\end{Proposition}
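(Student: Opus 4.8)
The plan is to establish the two equalities in \eqref{eq.partition} separately, and then verify disjointness of the refined collection $\{B_I C_I : |I| = m\}$.

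First I would prove $\{E \ge m\} = \bigcup_{|I|=m} B_I$. Recall that $E = \sum_i \Ind\{A_i\}$ counts how many of the $A_i$ occur, so $\{E \ge m\}$ is the event that at least $m$ of the $A_i$ occur. If at least $m$ events occur, then there exists some index set $I$ of size exactly $m$ all of whose events occur, i.e.\ $\omega \in B_I = \bigcap_{j \in I} A_j$; conversely, if $\omega \in B_I$ for some $I$ with $|I|=m$, then at least those $m$ events occur, so $E \ge m$. This is just the combinatorial restatement already used in Proposition~\ref{prop:means_of_binomial}, so it requires no new work.

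Next I would show $\bigcup_{|I|=m} B_I = \bigcup_{|I|=m} B_I C_I$. Since $B_I C_I \subseteq B_I$, the right-hand union is contained in the left. For the reverse inclusion, the key observation is that $C_I$ encodes a \emph{minimality/leftmost} selection rule: $C_I = \bigcap_{k \notin I,\, k < \max I} A_k^c$ requires that none of the events indexed \emph{below} $\max I$ and \emph{outside} $I$ occur. Given any $\omega$ with $E(\omega) \ge m$, I would select $I$ canonically by taking the indices of the first $m$ events that occur, reading from $i=1$ upward. With this choice every index $k < \max I$ that is not in $I$ has $A_k$ not occurring (otherwise it would have been selected before $\max I$), so $\omega \in C_I$ as well; hence $\omega \in B_I C_I$. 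This shows the left union is contained in the right, giving equality.

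Finally I would prove disjointness. Suppose $|I| = |J| = m$ with $I \ne J$, and suppose for contradiction $\omega \in B_I C_I \cap B_J C_J$. The natural approach is to compare $\max I$ and $\max J$; assume without loss of generality $\max I \le \max J$. Since $I \ne J$ but both have size $m$, there is an index that lies in exactly one of them; examining the indices below $\max J$ one finds an index $k \in I \setminus J$ (or the symmetric case) with $k < \max J$, so that $C_J$ forces $A_k^c$ while $B_I$ forces $A_k$, a contradiction. The main obstacle is getting this index-chasing argument airtight: one must argue carefully that $I\neq J$ with equal cardinality produces an index witnessing the conflict with $C_J$ (or $C_I$), handling the boundary cases $\max I = \max J$ and $\max I < \max J$ cleanly. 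I expect the cleanest route is to show that the canonical leftmost selection described above is the \emph{unique} size-$m$ set $I$ with $\omega \in B_I C_I$, which immediately yields disjointness.
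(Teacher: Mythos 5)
Your proposal is correct and follows essentially the same route as the paper: the same leftmost-selection idea for the inclusion $\bigcup_{|I|=m}B_I\subseteq\bigcup_{|I|=m}B_IC_I$ (the paper repairs an arbitrary witness $I$ by adjoining the offending indices and keeping the smallest $m$, which amounts to your canonical choice), and the same index-conflict argument for disjointness. The step you flag as the main obstacle is in fact already airtight as you sketched it: with $\max I\le\max J$ any $k\in I\setminus J$ satisfies $k\le\max I\le\max J$ and $k\ne\max J$, so $k<\max J$ and $B_I\subseteq A_k$ clashes with $C_J\subseteq A_k^{\comp}$, which is exactly the paper's one-line conclusion.
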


\begin{proof}
 
 The first equality in \eqref{eq.partition} is straightforward from the definition of
 the random variable $E$. For the second equality
 we note that the relation $\boldsymbol{\supseteq}$ follows trivially;
 to prove the opposite relation $\boldsymbol{\subseteq}$ it remains to show that
 if  $\omega$ is such that $\omega\in B_I$ and $\omega\notin C_I$, 
 then there exists $I'$ such that $|I'|=m$ and $\omega\in B_{I'}C_{I'}$.  Notice that
 if $\omega\notin C_I$, then there exists a nonempty set $J$ satisfying 
 $\max J<\max I$, with $j\in J$ if and only if $\omega\notin A_j^c$. 
 Select $I'$ as the set formed
 by the smaller $m$ elements of $I\cup J$.  In consequence,
 \begin{equation*}
  \omega\in
  \Big(\bigcap_{j\in I\cup J}A_j\Big) \Big(\bigcap_{\substack{k\notin I\cup J,\\k\le\max I}} A_k^c\Big)
  \subseteq
  \Big(\bigcap_{j\in I'}A_j\Big) \Big(\bigcap_{\substack{k\notin I',\\k\le\max I'}} A_k^c\Big)
  =B_{I'}C_{I'}.
 \end{equation*}  
 This completes the proof of the second equivalence in \eqref{eq.partition}.
 
 Next we show that the collection of sets $\{B_IC_I:|I|=m\}$ is disjoint.  Consider two sets of indexes
 $I_1$ and $I_2$ such that
 $|I_1|=|I_2|=m$ and $I_1\neq I_2$. Take $i$ such that
 $i\in I_1$, $i\notin I_2$ and w.l.o.g.\ further assume that $i<\max I_2$.  
 Then $B_{I_1}\subseteq A_i$ while $C_{I_2}\subseteq A_i^c$. \qed
\end{proof}
This proposition implies that
 \begin{align*}
  \beta_n&=\Exp\Big[Y\Ind\Big\{\bigcup_{|I|=n} B_I\Big\}\Big]
    =\Exp\Big[Y\Ind\Big\{\bigcup_{|I|=n} B_IC_I\Big\}\Big] 
  =\sum_{|I|=n}\Exp\left[Y\Ind\left\{C_I\right\}  
     \big|B_I\right]\Prob\left(B_I\right).
 \end{align*}  
  Therefore, if (i)
 reliable estimates of  $\Prob\big(B_I\big)$ are available, and
 (ii) it is possible to simulate from the conditional measures
 $\Prob\left(\, \cdot \mid B_I\right)$,
then the following is an unbiased estimator of $\Exp[Y\Ind\{E\ge n\}]$:
 \begin{equation}\label{beta_hat}
 \hat{\beta}_n
   :=\sum_{|I|=n} \frac{ \Prob(B_I) }{\lceil R/\binom{d}{n} \rceil} 
   \sum_{r=1}^{\lceil R/\binom{d}{n} \rceil}
   Y_{I,r} \Ind\{ C_I \}_{I,r} \,.
 \end{equation}
 Here, similar to before, $Y_{I,r}$ and $\Ind\{ \cdot \}_{I,r}$ denote independent sampling conditioned on $B_I$.
 
Notice that a permutation of the sets $A_1,\dots,A_d$ will result in
a different collection of events $C_I$, and also a slightly different estimator.  

\subsection{Applying $\hat{\beta}_i$ to estimate $\alpha$}

The $\hat{\beta}_i$ estimators can be used in various ways to estimate the probability
$\alpha=\Prob(A)$. The simplest way is to set $Y=1$ a.s.\ in $\hat{\beta}_1$ \eqref{beta_hat},
leading to the estimator 
\begin{align} \label{beta_on_alpha_1}
\widehat{(\beta_1 \ddagger \alpha)} := \Prob(A_1) +  
\sum_{i=2}^d \frac{\Prob(A_i)}{\lceil R/(d{-}1) \rceil} \hspace{-5pt} \sum_{r=1}^{\lceil R/(d{-}1) \rceil} 
	\Ind\{A_1^\comp \dots A_{i-1}^\comp\}_{i,r}  \,,
\end{align}
using the notation from \eqref{est-beta_1}. Note, we achieve minor improvement in \eqref{beta_on_alpha_1} over \eqref{beta_hat} when $Y=1$ a.s.\ as $\Exp[1 \mid A_1] = 1$ does not require estimation.

More effective estimators can be constructed if we use $\hat{\beta}_n$ to estimate terms from $\hat{\alpha}_{n-1}$ \eqref{hat_alpha_n}.
We label the random terms in $\hat{\alpha}_n$ as
\begin{equation}\label{residual}
 R_n := \Big[ \sum_{i=0}^n (-1)^{i} \BigBinom{E}{i} \Big] \Ind\{ E \ge n+1\}, \quad \text{and say} \quad \mathcal{R}_n :=\Exp[R_n] \,.
\end{equation}
Now, if we choose $Y:=\sum_{i=0}^{n-1} (-1)^{i} \binom{E}{i}$ then it is obvious that
\begin{equation*}
 \beta_n:=\Exp\Big\{ \Big[ \sum_{i=0}^{n-1} (-1)^{i} \BigBinom{E}{i} \Big] \Ind\{ E \ge n\}\Big\}=\mathcal{R}_{n-1}.
\end{equation*}
This leads to the set of estimators
\begin{align*}
	\widehat{(\beta_n \ddagger \alpha)} 
	&:= \sum_{i=1}^{n-1} (-1)^{i-1} \sum_{|I|=i} \Prob\Big( \bigcap_{i\in I} A_i \Big) \\
	&\quad +
	\sum_{|I|=n} \frac{ \Prob(B_I) }{\lceil R/\binom{d}{n} \rceil} 
   \sum_{r=1}^{\lceil R/\binom{d}{n} \rceil}
   \Big[ \sum_{i=0}^{n-1} (-1)^{i} \BigBinom{E_{I,r}}{i} \Big]
	\Ind\{ E \ge n \}_{I,r} \,, 
\end{align*}
for $n = 2, \dots d-1$.
In particular, for $n=2$
\begin{align} \label{beta_on_alpha_2}
	\widehat{(\beta_2 \ddagger \alpha)} 
	&:= \sum_i \Prob(A_i) + 
	\sum_{i<j} \frac{ \Prob(A_i, A_j) }{\lceil R/\binom{d}{2} \rceil} 
   \sum_{r=1}^{\lceil R/\binom{d}{2} \rceil}
   (1 - E_{ij,r}) \Ind\{ E \ge 2 \}_{ij,r} \,, 
\end{align}
where the $ij$ subscript indicates sampling conditional on $A_iA_j$, similar to before.

\section{Efficiency results} \label{scn:efficiency_results}

In this section we analyse the performance of the estimators in a rare-event
setting.  Recall that in such a setting, $\{A_1(\gamma),\dots,A_d(\gamma)\}$ denotes 
an indexed collection of not necessarily independent rare events and our objective is to
calculate $\alpha(\gamma)=\Prob(\bigcup_{i}^d A_i(\gamma))$ as $\gamma\to\infty$.
For such a \emph{rare-event} estimation problem there are specialised concepts of efficiency. In Section~\ref{scn:criteria} these definitions of efficiency are introduced. In addition,
we provide efficiency criteria for the proposed estimators under very general assumptions. 

In Sections~\ref{scn:ident_margs} and~\ref{scn:elliptical} we specialise in rare events associated with extremes. 
In such a framework, we show when the estimator $\hat{\alpha}_1$ is efficient for: i) 
a vast array of multivariate distributions with
identical marginals in Section~\ref{scn:ident_margs}, and ii) the specific cases of normal and elliptical distributions in Section~\ref{scn:elliptical}.
For this section we take the number of replicates $R$ to be 1.

\subsection{Variance Reduction} \label{scn:Vreduction}

First we compare the efficiency of our proposed estimator $\hat{\alpha}_1$ against that of
the crude Monte Carlo (CMC) estimator $\hat{\alpha}_0(\gamma)$ 
of $\alpha(\gamma):=\Prob(A(\gamma))$. 
An upper bound for $\Var \hat{\alpha}_0(\gamma)$ is
\begin{equation*}
	\Var \hat{\alpha}_0(\gamma) = \Prob(A(\gamma)) [1 - \Prob(A(\gamma)] < \Prob( A(\gamma) ) 
	\le \sum_i \Prob(A_i(\gamma)) \,.
\end{equation*}
This implies that the variance of the CMC estimator is of order 
$\Oh(\max_i \Prob(A_i(\gamma)))$,
which is the best possible without making any further assumptions.
In contrast an upper bound of $\Var \hat{\alpha}_1(\gamma) = \Var R_1$, where $R_1 =(1-E) \Ind\{E \ge 2\}$ from \eqref{residual}, is
\begin{equation}
	\Var\hat{\alpha}_1(\gamma) \le \Exp[R_1^2] < 2 \Exp \Big[ \BigBinom{E}{2} \Ind\{E \ge 2\} \Big] \underset{\eqref{summands}}{=} 2 \sum_{i<j} \Prob(A_i(\gamma), A_j(\gamma)) \,. \label{Bound}
\end{equation}
Thus the variance of our estimator $\hat{\alpha}_1(\gamma)$ is of order 
$\Oh(\max_{i<j} \Prob(A_i(\gamma), A_j(\gamma)))$,
so we can conclude that $\hat{\alpha}_1(\gamma)$ is asymptotically superior to CMC.

Next we turn our attention to the estimator $\hat{\beta}_n$.  
The following proposition shows that the reduction of variance of the estimator
$\hat{\beta}_n$ is of at least of a factor $\max_{|I|=n}\Prob(B_I)$
with respect to the non-conditional (crude) version estimator $\hat{\beta}_n^{[0]}$
defined as
\begin{equation}
 \hat{\beta}_n^{[0]}:=\sum_{|I|=n} \frac1{\lceil R / \binom{d}{n} \rceil} \sum_{r=1}^{\lceil R / \binom{d}{n} \rceil}  Y_{Ir}\Ind\{B_IC_I\}
\end{equation}
\begin{Proposition}\label{prop:VarRedCMC}
 \begin{equation*}
  \Var(\hat{\beta}_n) \le \max_{|I|=n}\Prob(B_I) 
   \Var(\hat{\beta}_n^{[0]}) \,.
 \end{equation*}
\end{Proposition}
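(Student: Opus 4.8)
The plan is to exploit the structure of both estimators as sums, over the index sets $I$ with $|I|=n$, of mutually independent contributions, so that the variance of each estimator decomposes as a sum of per-$I$ variances; the inequality then reduces to a pointwise comparison for each fixed $I$. Write $N=\binom{d}{n}$ and $M=\lceil R/N\rceil$, and recall that both estimators draw, for each $I$ and each replicate $r$, samples that are mutually independent across all pairs $(I,r)$ (conditioned on $B_I$ in the case of $\hat{\beta}_n$, and unconditionally in the case of $\hat{\beta}_n^{[0]}$). Since the per-$I$ term of $\hat{\beta}_n$ is $\frac{\Prob(B_I)}{M}\sum_{r=1}^M Y_{I,r}\Ind\{C_I\}_{I,r}$ with i.i.d.\ conditional replicates, averaging picks up a factor $\Prob(B_I)^2/M$, and consequently
\[
\Var(\hat{\beta}_n)=\sum_{|I|=n}\frac{\Prob(B_I)^2}{M}\Var\big(Y\Ind\{C_I\}\mid B_I\big), \qquad \Var(\hat{\beta}_n^{[0]})=\sum_{|I|=n}\frac1M\Var\big(Y\Ind\{B_IC_I\}\big).
\]

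The crux is the pointwise inequality, for each fixed $I$,
\[
\Prob(B_I)\,\Var\big(Y\Ind\{C_I\}\mid B_I\big)\le \Var\big(Y\Ind\{B_IC_I\}\big).
\]
To prove it I would set $W:=Y\Ind\{C_I\}$ and $p:=\Prob(B_I)$, and observe that $Y\Ind\{B_IC_I\}=W\Ind\{B_I\}$. Writing $\mu:=\Exp[W\mid B_I]$ and using $\Exp[W\Ind\{B_I\}]=p\,\mu$ together with $\Exp[(W\Ind\{B_I\})^2]=\Exp[W^2\Ind\{B_I\}]=p\,\Exp[W^2\mid B_I]$, a direct computation yields
\[
\Var\big(W\Ind\{B_I\}\big)-p\,\Var\big(W\mid B_I\big)=p(1-p)\,\mu^2\ge 0,
\]
the inequality holding because $0\le p=\Prob(B_I)\le 1$. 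This is the only place where the bound $\Prob(B_I)\le1$ enters, and it is precisely the source of the variance gain.

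To finish, I would sum the pointwise inequality over all $I$ with $|I|=n$, divide by $M$, and bound each factor $\Prob(B_I)$ by $\max_{|I|=n}\Prob(B_I)$:
\[
\Var(\hat{\beta}_n)=\frac1M\sum_{|I|=n}\Prob(B_I)^2\Var\big(Y\Ind\{C_I\}\mid B_I\big)\le \max_{|I|=n}\Prob(B_I)\cdot\frac1M\sum_{|I|=n}\Var\big(Y\Ind\{B_IC_I\}\big),
\]
and the right-hand side equals $\max_{|I|=n}\Prob(B_I)\,\Var(\hat{\beta}_n^{[0]})$, as required.

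I expect no deep obstacle here: the substantive step is the per-$I$ variance identity, whose verification is the short algebraic computation above. The point requiring the most care is the bookkeeping justifying that both variances split as sums over $I$, that is, confirming the mutual independence of the samples across the different index sets $I$ that is implicit in the definitions of $\hat{\beta}_n$ and $\hat{\beta}_n^{[0]}$; once that is granted, the remainder is routine.
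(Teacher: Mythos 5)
Your proof is correct and follows essentially the same route as the paper's: decompose the variance over the index sets $I$ by independence, reduce to the per-$I$ comparison $\Prob(B_I)\Var(Y\Ind\{C_I\}\mid B_I)\le\Var(Y\Ind\{B_IC_I\})$, and factor out $\max_{|I|=n}\Prob(B_I)$. The only cosmetic differences are that you carry the replicate count $M=\lceil R/\binom{d}{n}\rceil$ explicitly and compute the key step as the exact identity $\Var(W\Ind\{B_I\})-p\Var(W\mid B_I)=p(1-p)\mu^2$, where the paper states it as a one-line inequality using $\Prob(B_I)\ge\Prob(B_I)^2$.
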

\begin{proof}
Let $W_I:=Y\Ind\{C_I\}$.  By independence of the $W_I$
we can write the variance of $\hat{\beta}_n$ as
\begin{align*}
\Var(\hat{\beta}_n)
  =\Var\Big(\sum_{|I|=n} W_I \Prob(B_I) \,\Big|\, B_I \Big)
     &=\sum_{|I|=n} \Prob(B_I)^2 \Var( W_I \mid B_I )\\
     &\le\max_{|I|=n}\Prob(B_I)\sum_{|I|=n} \Prob(B_I)\Var( W_I \mid B_I )\,.
\end{align*}
Now, observe that
\begin{align*}
 \Prob(B_I)\Var( W_I \mid B_I )
   &\le\Prob(B_I)\Exp[ W_I^2 \mid B_I] -{\Prob(B_I)^2\Exp[W_I \mid B_I ]^2}\\
   &=\Exp[W_I^2\Ind\{B_I\}]-\Exp[W_I\Ind\{B_I\}]^2
   =\Var\left[W_I\Ind\{B_I\}\right].
\end{align*}
Thus we have proven that
\begin{align*}
\Var ( \hat{\beta}_n )
   &\le\max_{|I|=n}\Prob(B_I)\sum_{|I|=n} \Var( W_I\Ind\{B_I\} )
   =\max_{|I|=n}\Prob(B_I)\sum_{|I|=n}\Var(\hat{\beta}_0 )\,.
\end{align*}
\qed
\end{proof}

\subsection{Efficiency criteria} \label{scn:criteria}

We now ask if and when $\hat{\alpha}_1$ and $\hat{\beta}_n$ are efficient in the rare-event sense. We must first define efficiency, as there are several common benchmarks for the efficiency of a rare-event estimator.

\begin{Definition} \label{efficiency_criteria} An estimator $\hat{p}_{\gamma}$ of some rare probability $p_{\gamma}$ which satisfies $\forall \epsilon>0$ \\
\begin{subequations} \label{criteria}
 \begin{tabularx}{\hsize}{XXX}
     \begin{equation}
       \limsup\limits_{\gamma\to\infty} \, \frac{\Var\hat p_{\gamma}}
	{p_{\gamma}^{2-\epsilon}}=0 \label{a}
     \end{equation} &
     \begin{equation}
       \limsup\limits_{\gamma\to\infty}\frac{\Var \hat{p}_{\gamma}}
	{p_{\gamma}^2}<\infty \label{b}
     \end{equation} &
     \begin{equation}
       \limsup\limits_{\gamma\to\infty} \, \frac{\Var\hat{p}_{\gamma}}{p_{\gamma}^2}=0  \label{c}
     \end{equation}
   \end{tabularx}
\end{subequations}
has \emph{logarithmic efficiency} (LE) \eqref{a}, \emph{bounded relative error} (BRE) \eqref{b}, or \emph{vanishing relative error} (VRE) \eqref{c} respectively.
\end{Definition}

The levels of efficiency in Definition~\ref{efficiency_criteria} are given in increasing order of strength, that is, VRE $\Rightarrow$ BRE $\Rightarrow$ LE. As VRE is often too difficult a goal, we focus on BRE and LE. The following proposition gives an alternative form of the conditions in \eqref{criteria} for the specific case of our estimator $\hat{\alpha}_1$.

\begin{Proposition} \label{prop:efficiency}
	The estimator $\hat{\alpha}_1$ has LE iff it holds that $\forall \epsilon > 0$
	\begin{equation}\label{our_LE_criteria}
	  \limsup_{\gamma\to\infty}
		\dfrac{\max_{i < j} \, \Prob(A_i(\gamma), A_j(\gamma))}{\max_k \, \Prob(A_k(\gamma))^{2-\epsilon}} = 0 \,,
	\end{equation}
	and has BRE iff
	\begin{equation}\label{our_BRE_criteria}
	  \limsup_{\gamma\to\infty}
		\dfrac{\max_{i < j} \, \Prob(A_i(\gamma), A_j(\gamma))}{\max_k \, \Prob(A_k(\gamma))^2} < \infty \,.
	\end{equation}
\end{Proposition}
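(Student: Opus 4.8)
The plan is to pin down both $\Var\hat\alpha_1$ and $\alpha$ up to multiplicative constants depending only on $d$ (and on the fixed $\epsilon$), and then read off the two equivalences directly from Definition~\ref{efficiency_criteria}. Concretely, I would establish the two-sided estimates
\[
 \tfrac12\,\max_{i<j}\Prob(A_i,A_j)\ \le\ \Var\hat\alpha_1\ \le\ 2\BigBinom{d}{2}\max_{i<j}\Prob(A_i,A_j),
 \qquad
 \max_k\Prob(A_k)\ \le\ \alpha\ \le\ d\,\max_k\Prob(A_k),
\]
valid for all sufficiently large $\gamma$. The second chain is immediate from the Boole--Fr\'echet inequalities \eqref{boole-frechet} together with $\alphaMax=\sum_i\Prob(A_i)\le d\max_k\Prob(A_k)$. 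Granting the first chain, the ratio $\Var\hat\alpha_1/\alpha^{2-\epsilon}$ differs from $\max_{i<j}\Prob(A_i,A_j)/(\max_k\Prob(A_k))^{2-\epsilon}$ only by a factor lying in a fixed compact subinterval of $(0,\infty)$ that depends on $d$ and $\epsilon$ but not on $\gamma$; hence the $\limsup$ of one vanishes (resp.\ is finite) exactly when that of the other does. Taking $\epsilon>0$ then yields the LE criterion \eqref{our_LE_criteria}, and the case $\epsilon=0$ yields the BRE criterion \eqref{our_BRE_criteria}.

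To obtain the variance chain I would first use $\hat\alpha_1=\alphaMax+R_1$ with $R_1=(1-E)\Ind\{E\ge2\}$ from \eqref{residual}, so that $\Var\hat\alpha_1=\Var R_1=\Exp[R_1^2]-(\Exp R_1)^2$. Writing $q=\sum_{i<j}\Prob(A_i,A_j)$ and invoking the elementary inequality $\binom{E}{2}\le(E-1)^2<2\binom{E}{2}$ for integers $E\ge2$ (both halves reduce, after dividing by $E-1>0$, to $E\ge2$ and $E-1<E$ respectively), combined with Proposition~\ref{prop:means_of_binomial} in the form $\Exp[\binom{E}{2}\Ind\{E\ge2\}]=q$, gives the sandwich
\[
 q\ \le\ \Exp[R_1^2]=\Exp\big[(E-1)^2\Ind\{E\ge2\}\big]\ <\ 2q .
\]
The upper bound here is exactly \eqref{Bound}; the matching lower bound is the one genuinely new computation.

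It then remains to show that $(\Exp R_1)^2$ is negligible against $\Exp[R_1^2]$. By unbiasedness $\Exp R_1=\alpha-\alphaMax$, so $(\Exp R_1)^2=(\alphaMax-\alpha)^2\le q^2$ by the order-two Bonferroni inequality \eqref{bonf_even}. Since $\alpha\to0$ forces $\max_k\Prob(A_k)\to0$ and hence $q\le\binom{d}{2}\max_k\Prob(A_k)\to0$, for all large $\gamma$ we have $q<\tfrac12$, whence $(\Exp R_1)^2\le q^2<q/2\le\Exp[R_1^2]/2$. Therefore $\Var R_1>\Exp[R_1^2]/2\ge q/2$, while $\Var R_1\le\Exp[R_1^2]<2q$; together with $\max_{i<j}\Prob(A_i,A_j)\le q\le\binom{d}{2}\max_{i<j}\Prob(A_i,A_j)$ this is precisely the first chain above.

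The only substantive obstacle is the lower direction of the variance estimate: establishing $\Exp[R_1^2]\gtrsim q$ through the sharp constant in $\binom{E}{2}\le(E-1)^2$, and confirming that the squared-mean correction is of strictly smaller order, so that the crude upper bound \eqref{Bound} is in fact tight up to a constant. The upper bound and the comparison $\alpha\asymp\max_k\Prob(A_k)$ are routine. I would also take care to verify that the constants transferring the $\limsup$ are uniform in $\gamma$ — they are, since they depend only on $d$ and the fixed $\epsilon$ — so that a fixed positive finite factor cannot change whether the relevant $\limsup$ equals $0$ or is finite.
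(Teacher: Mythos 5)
Your proof is correct, and its broad strategy---sandwiching $\Var\hat{\alpha}_1$ between constant multiples of $\max_{i<j}\Prob(A_i,A_j)$ and $\alpha$ between constant multiples of $\max_k\Prob(A_k)$, then transferring the $\limsup$ conditions---is the same as the paper's. The genuine difference is in how the lower variance bound is obtained. The paper argues $\Var\hat{\alpha}_1 \ge \Var\Ind\{E\ge2\} = \Prob(E\ge2)\Prob(E\le1)$ together with $\Prob(E\ge2)\ge\binom{d}{2}^{-1}\sum_{i<j}\Prob(A_i,A_j)$ and $\Prob(E\le1)\to1$; note that the first of these inequalities (monotonicity of variance under pointwise domination by an indicator) is asserted there without justification and is not automatic for general random variables, though it does hold in this situation. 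You instead compute $\Var R_1=\Exp[R_1^2]-(\Exp R_1)^2$ directly, obtain $\Exp[R_1^2]\ge q$ from the pointwise inequality $\binom{E}{2}\le(E-1)^2$ on $\{E\ge2\}$ combined with Proposition~\ref{prop:means_of_binomial}, and dispose of the squared mean via $(\alphaMax-\alpha)^2\le q^2=\oh(q)$ using the Bonferroni inequality \eqref{bonf_even} and the rare-event hypothesis $q\to0$. This costs you one extra (easy) step, but buys a fully self-contained two-sided bound $\Var\hat{\alpha}_1\asymp q$ for all large $\gamma$, from which both the LE and BRE equivalences, in both directions, drop out simultaneously rather than requiring separate $(\Rightarrow)$ and $(\Leftarrow)$ arguments. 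Two cosmetic points: the strict inequality $\Exp[R_1^2]<2q$ should be weakened to $\le$ when $\Prob(E\ge2)=0$ (the degenerate case in which all quantities vanish and the claims are trivial), and for $\epsilon\ge 2$ you should either note that the conditions for small $\epsilon$ imply those for larger $\epsilon$, or observe that your transfer constants still form a compact subinterval of $(0,\infty)$ for every fixed $\epsilon$.
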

\begin{proof} 
	We prove the LE claim \eqref{our_LE_criteria}. Proof of the BRE claim follows the same arguments. \\
	($\Rightarrow$) We can see that
	\begin{align} 
		\Var \hat{\alpha}_1(\gamma) &\ge \Var \Ind\{E \ge 2\} = \Prob(E \ge 2) \, \Prob(E \le 1) \,, \quad \Prob(E \le 1)\to1 \,,  \label{first_bound} \\
	\intertext{and}		
		\Prob(E \ge 2) &\ge \BigBinom{d}{2}^{-1} \sum_{n=2}^d \BigBinom{n}{2} \Prob(E = n) \underset{\eqref{summands}}{=} \BigBinom{d}{2}^{-1} \sum_{i<j} \Prob(A_i(\gamma), A_j(\gamma)) \label{second_bound} \,.
	\end{align}
	So, $\forall \epsilon > 0$,
	\begin{align*}
		0 &\underset{\eqref{a}}{=} \limsup_{\gamma\to\infty}\,\frac{\Var\hat{\alpha}_1(\gamma)}{\Prob(A)^{2-\epsilon}} 
		\underset{\eqref{boole-frechet} \,\&\,\eqref{first_bound}}{>} \limsup_{\gamma\to\infty}\,\frac{\Prob(E \ge 2)}{\big(\sum_k \Prob(A_k(\gamma))\big)^{2-\epsilon}} \\
		&\underset{\eqref{second_bound}}{\ge} \Big[ d^{2-\epsilon} \BigBinom{d}{2} \Big]^{-1} \limsup_{\gamma\to\infty}\,\frac{\max_{i < j}\Prob(A_i(\gamma), A_j(\gamma))}{\max_k 
		 \Prob(A_k(\gamma))^{2-\epsilon}}
	\end{align*}
	which implies \eqref{our_LE_criteria}. \\
	($\Leftarrow$) We can see that, $\forall \epsilon > 0$,
	\begin{align*}
		\limsup_{\gamma\to\infty}\,\frac{\Var\hat{\alpha}_1(\gamma)}{\Prob(A)^{2-\epsilon}}
		&\underset{\eqref{boole-frechet} \,\&\, \eqref{Bound}}{<} \limsup_{\gamma\to\infty}\,\frac{2 \sum_{i<j}\Prob(A_i(\gamma), A_j(\gamma))}
		{(\max_k \, \Prob(A_k(\gamma)))^{2-\epsilon}} \\
		&\le 2\BigBinom{d}{2} \limsup_{\gamma\to\infty}\,\frac{\max_{i < j}
		 \Prob(A_i(\gamma), A_j(\gamma))}
		{\max_k \Prob(A_k(\gamma))^{2-\epsilon}} 
		\underset{\eqref{our_LE_criteria}}{=} 0 \,,
	\end{align*}
	which implies \eqref{a}. \qed
\end{proof}

\begin{Example}
   If the $A_i$ events are independent then the estimator $\hat{\alpha}_1$ has BRE.
\end{Example}

For the efficiency of our $\hat{\beta}_n$ estimators, 
the following proposition provides a very simple yet non-trivial condition for BRE.
\begin{Proposition}
 The estimator $\hat{\beta}_n(\gamma)$ has BRE if
 \begin{equation*}
  \limsup_{\gamma\to\infty}\frac{\max_{|I|=n}\Prob(B_I)}{\beta_n(\gamma)}<\infty.
 \end{equation*}
\end{Proposition}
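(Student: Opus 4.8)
The plan is to reduce BRE, i.e.\ $\limsup_{\gamma\to\infty}\Var(\hat\beta_n)/\beta_n^2<\infty$, to the stated hypothesis by exploiting the conditional variance decomposition already established in the proof of Proposition~\ref{prop:VarRedCMC}. Working with $R=1$ as declared for this section, I write $W_I:=Y\Ind\{C_I\}$ so that $\hat\beta_n=\sum_{|I|=n}\Prob(B_I)\,W_I$, where the $W_I$ are independent and each is distributed under $\Prob(\,\cdot\mid B_I)$. As in the discussion preceding \eqref{beta_hat}, this gives $\beta_n=\sum_{|I|=n}\Exp[W_I\mid B_I]\,\Prob(B_I)$, and by independence of the $W_I$ it gives $\Var(\hat\beta_n)=\sum_{|I|=n}\Prob(B_I)^2\,\Var(W_I\mid B_I)$, exactly the identity used at the start of that earlier proof.

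The crux is a one-sided bound on each conditional second moment. Assuming the natural normalisation $0\le Y\le 1$ (so that $\beta_n$ is itself a rare probability-type quantity), we have $0\le W_I\le 1$ and hence $W_I^2\le W_I$. Consequently $\Var(W_I\mid B_I)\le\Exp[W_I^2\mid B_I]\le\Exp[W_I\mid B_I]$, which trades the square for a first moment. This is precisely what allows a first-power hypothesis on $\max_{|I|=n}\Prob(B_I)$ to control a squared relative error.

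Substituting this bound and pulling out the maximal weight yields
\begin{align*}
\Var(\hat\beta_n)
&\le\sum_{|I|=n}\Prob(B_I)^2\,\Exp[W_I\mid B_I] \\
&\le\max_{|I|=n}\Prob(B_I)\sum_{|I|=n}\Prob(B_I)\,\Exp[W_I\mid B_I]
=\max_{|I|=n}\Prob(B_I)\cdot\beta_n,
\end{align*}
where the last equality is the decomposition of $\beta_n$ above and uses $\Exp[W_I\mid B_I]\ge0$. Dividing by $\beta_n^2$ gives $\Var(\hat\beta_n)/\beta_n^2\le\max_{|I|=n}\Prob(B_I)/\beta_n$, and taking $\limsup_{\gamma\to\infty}$ together with the hypothesis delivers BRE.

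I expect the only genuine subtlety to be the treatment of $Y$: the telescoping from $W_I^2$ down to $W_I$ needs $0\le Y\le 1$ (or at least boundedness, which would cost an extra constant together with a matching bound relating $\Exp[\Ind\{C_I\}\mid B_I]$ back to $\Exp[W_I\mid B_I]$). For unrestricted signed $Y$ this clean first-power criterion cannot hold in general, so I would state the boundedness assumption explicitly rather than attempt to remove it. Everything else is the routine independence-and-variance bookkeeping inherited directly from Proposition~\ref{prop:VarRedCMC}.
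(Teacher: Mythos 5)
Your argument is correct and follows essentially the same route as the paper: the same conditional variance decomposition underlying Proposition~\ref{prop:VarRedCMC}, combined with the observation that the crude-form variance is of order $\beta_n(\gamma)$, which you carry out directly via $\Var(W_I\mid B_I)\le\Exp[W_I\mid B_I]$ rather than citing $\Var(\hat{\beta}_n^{[0]}(\gamma))=\Oh(\beta_n(\gamma))$. Your explicit flagging of the boundedness assumption $0\le Y\le 1$ is a genuine improvement in rigour, since the paper's appeal to ``$\hat{\beta}_n^{[0]}$ is an estimator in crude form'' silently relies on exactly this.
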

\begin{proof}
By Proposition~\ref{prop:VarRedCMC} and the hypothesis we have
\begin{align*}
 \limsup_{\gamma\to\infty}\frac{\Var (\hat{\beta}_n(\gamma) )}{\beta_n^2(\gamma)}
 &\le  \limsup_{\gamma\to\infty}\frac{\max_{|I|=n}\Prob(B_I)\Var ( \hat{\beta}_n^{[0]}(\gamma) )}{\beta_n^2(\gamma)}\\
 &\le c\, \limsup_{\gamma\to\infty}\frac{\Var( \hat{\beta}_n^{[0]}(\gamma) )}{\beta_n(\gamma)} \,.
\end{align*}
Since $\hat{\beta}_n^{[0]}$ is an estimator in crude form
then $\Var( \hat{\beta}_n^{[0]}(\gamma) )=\Oh(\beta_n(\gamma))$ as $\gamma \to \infty$, so the proof
is complete. \qed
\end{proof}
\begin{Corollary}
 The estimator $\widehat{(\beta_1 \ddagger \alpha)}$ from \eqref{beta_on_alpha_1} has BRE.
\end{Corollary}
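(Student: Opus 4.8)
The plan is to read off the result as an immediate consequence of the Proposition immediately preceding it, applied with $n=1$ and $Y\equiv 1$ a.s. First I would identify the target: the estimator $\widehat{(\beta_1 \ddagger \alpha)}$ of \eqref{beta_on_alpha_1} is the $\hat{\beta}_1$ construction specialised to $Y=1$, so its estimand is
\[ p_\gamma = \beta_1(\gamma) = \Exp[\Ind\{E(\gamma)\ge 1\}] = \Prob(A(\gamma)) = \alpha(\gamma). \]
Establishing BRE therefore amounts to controlling $\limsup_{\gamma\to\infty}\Var(\widehat{(\beta_1 \ddagger \alpha)})/\alpha(\gamma)^2$ via the sufficient condition furnished by the preceding Proposition.

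Next I would verify that condition for $n=1$. Since $B_{\{i\}}=A_i$, the numerator $\max_{|I|=1}\Prob(B_I)$ is simply $\max_i\Prob(A_i)$, while the denominator $\beta_1$ equals $\alpha$. The Boole--Fr\'echet inequalities \eqref{boole-frechet} give $\max_i\Prob(A_i(\gamma))\le\alpha(\gamma)$ for every $\gamma$, so
\[ \limsup_{\gamma\to\infty}\frac{\max_i\Prob(A_i(\gamma))}{\beta_1(\gamma)} \le 1 < \infty. \]
Thus the hypothesis holds unconditionally, and no dependence structure needs to be assumed: the required ratio is bounded by $1$ for free. Hence $\hat{\beta}_1$ with $Y=1$ has BRE.

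The one genuine point of care is that $\widehat{(\beta_1 \ddagger \alpha)}$ is not literally $\hat{\beta}_1$: it replaces the stochastic $i=1$ stratum term by its exact conditional mean $\Exp[Y\mid A_1]\Prob(A_1)=\Prob(A_1)$ and reallocates replicates over the remaining $d-1$ strata. I would bridge this by noting that, because the per-stratum contributions are sampled independently, evaluating one stratum deterministically can only lower the variance, so the BRE bound transfers. Equivalently, a short direct computation closes the argument: with $R=1$, independence of the strata gives
\[ \Var(\widehat{(\beta_1 \ddagger \alpha)}) = \sum_{i=2}^d \Prob(A_i)^2\,\Var\big(\Ind\{A_1^\comp\cdots A_{i-1}^\comp\}\mid A_i\big) \le \tfrac14\sum_{i=2}^d \Prob(A_i)^2 \le \tfrac{d-1}{4}\,\alpha^2, \]
using that an indicator's conditional variance is at most $\tfrac14$ and that $\Prob(A_i)\le\alpha$. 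Dividing by $\alpha^2$ yields the constant bound $\tfrac{d-1}{4}$, which is exactly \eqref{b}. The main (and essentially only) obstacle is this bookkeeping linking the modified estimator back to the Proposition; the efficiency itself requires no work, since Boole--Fr\'echet makes the relevant ratio bounded by $1$ for every union of events.
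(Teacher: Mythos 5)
Your proof is correct and follows the same route the paper intends: the Corollary is stated without proof as an immediate consequence of the preceding Proposition with $n=1$, where $\max_i\Prob(A_i(\gamma))\le\beta_1(\gamma)=\alpha(\gamma)$ by the Boole--Fr\'echet inequalities makes the hypothesis hold with constant $1$. Your additional direct variance bound, handling the fact that the first stratum of $\widehat{(\beta_1 \ddagger \alpha)}$ is computed exactly rather than sampled, is a welcome piece of bookkeeping that the paper leaves implicit.
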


\subsection{Efficiency for identical marginals and dependence} \label{scn:ident_margs}

In this and the following subsections, we concentrate on rare events associated to 
extremes.  More precisely, we let $\bfX = (X_1,\dots,X_n)$ be an arbitrary random vector and define
$M=\max_i X_i$.  Therefore, we define $A_i(\gamma)=\{X_i> \gamma\}$ implying that the event of interest $A$ is equivalent to $\{M>\gamma\}$.

In this subsection, we assume the $X_i$ have identical marginal distributions. This simplifies the condition for BRE of $\hat{\alpha}_1$, \eqref{our_BRE_criteria}, so that it is now solely determined by the \emph{copula} of $\bfX$. We investigate some common tail dependence measures of copulas (tail dependence parameter and residual tail index) and also some common families of copulas (Archimedean copulas) to see when the estimator $\hat{\alpha}_1$ exhibits efficiency. 

\subsubsection{Asymptotic dependence} \label{ssec:asymp_dep}

The most basic measurement of tail dependence between a pair $(X_i, X_j)$ with common marginal distribution $F$ and copula $C_{ij}$ (cf.\ \cite{joe1997multivariate,nelsen2007introduction}) is
\[
	\lambda_{ij} = \lim_{\upsilon\to1} \Prob(X_i > \upsilon \mid X_j > \upsilon) = \lim_{\upsilon \to 1} \frac{1 - C_{ij}(\upsilon,\upsilon)}{1-\upsilon}
\]
where $\lambda_{ij} \in [0,1]$ is called the \emph{(upper) tail dependence parameter (or coefficient)} \cite{joe1997multivariate,mcneil2015quantitative}. We say the $(X_i,X_j)$ pair exhibit \emph{asymptotic independence} (AI) when $\lambda_{ij} = 0$, or \emph{asymptotic dependence} (AD) when $\lambda_{ij} > 0$. The canonical examples given for each case are the (non-degenerate) bivariate normal distribution for AI, and the bivariate Student $t$ distribution for AD \cite{sibuya1960bivariate}. 

For $\hat{\alpha}_1$ to have BRE, all pairs in $\bfX$ must exhibit AI. This is a necessary but not sufficient condition, therefore we will employ a more refined tail dependence measurement.

\subsubsection{Residual tail index}

We must first define two classes of functions:
\begin{itemize}
\item $L(x)$ is \emph{slowly-varying} (at $\infty$) if $L(cx)/L(x) \to 1$ as $x \to \infty$ for all $c > 0$,
\item $f(x)$ is \emph{regularly-varying} (at $\infty$) with index $\tau > 0$ if it takes the form $f(x) = L(x) x^{-\tau}$ for some $L(x)$ which is slowly-varying (cf.\ \cite{bingham1989regular,resnick1987extreme}).
\end{itemize}

We will assume, w.l.o.g., the marginals of $\bfX$ to be unit Fr\'echet distributed (i.e., $F_1(x) = \exp(-x^{-1}) \sim 1 - x^{-1}$). Ledford and Tawn \cite{ledford1996statistics,ledford1997modelling,ledford1998concomitant} first noted that the joint survivor functions for a wide array of bivariate distributions satisfy
\begin{equation} \label{ledford_form}
	\Prob(X_i > \gamma, X_j > \gamma) \sim L(\gamma) \gamma^{-1/\eta} \qquad \text{ as } \gamma \to \infty
\end{equation}
for a slowly-varying $L(\gamma)$ and an $\eta \in (0, 1]$. In other words, \eqref{ledford_form} says that $\Prob(X_i > \gamma, X_j > \gamma)$ is regularly-varying with index $1/\eta$.

The index is called the \emph{residual tail index} \cite{de2007extreme,nolde2014geometric}.\footnote{The older (and less insightful) name for $\eta$ is the \emph{coefficient of tail dependence} \cite{ledford1996statistics,resnick2002hidden}.} When $(X_i, X_j)$ exhibit AD (AI) then we typically have $\eta = 1$ ($\eta < 1$).\footnote{Hashorva \cite{hashorva2010residual} has found a case where an elliptically distributed $(X_i, X_j)$ has $\eta = 1$ and AI.} For independent components we have $\eta = 1/2$, so Ledford and Tawn \cite{ledford1996statistics} describe bivariate distributions with $\eta = 1/2$ as having \emph{near independence}. When $\eta < 1/2$ the random pair take large values together less frequently than they would if independent.

Returning to our original problem of estimating $\alpha(\gamma)$, let us label the residual tail index for every $(X_i,X_j)$ pair of $\bfX$ as $\eta_{ij}$. Also, let $\eta=\max_{ij}\eta_{ij}$ and $L$ be the associated slowly varying function. The following proposition outlines how these values relate to efficiency of $\hat{\alpha}_1$:

\begin{Proposition}
If \eqref{ledford_form} is satisfied for the maximal pair of $\bfX$, that is,
\[ \max_{i < j} \Prob(X_i > \gamma, X_j > \gamma) \sim L(\gamma) \gamma^{-1/\eta} \qquad \text{ as } \gamma \to \infty \,, \]
then the estimator $\hat{\alpha}_1$ has: i) BRE if $\eta<1/2$ or if $\eta=1/2$ and $L(\gamma) \not\to \infty$ as $\gamma \to \infty$, ii) LE if $\eta=1/2$.  
\end{Proposition}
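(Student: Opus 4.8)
The plan is to reduce the entire statement to the two efficiency criteria already established in Proposition~\ref{prop:efficiency}, and then to evaluate the resulting $\limsup$s using standard growth properties of slowly-varying functions. The key simplification comes from the identical-marginals assumption with unit Fr\'echet margins: since $\Prob(X_k > \gamma) = 1 - \exp(-\gamma^{-1}) \sim \gamma^{-1}$ for every $k$, the denominators in \eqref{our_BRE_criteria} and \eqref{our_LE_criteria} become explicit, namely $\max_k \Prob(A_k(\gamma)) \sim \gamma^{-1}$, so that $\max_k \Prob(A_k(\gamma))^2 \sim \gamma^{-2}$ and $\max_k \Prob(A_k(\gamma))^{2-\epsilon} \sim \gamma^{-(2-\epsilon)}$.

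First I would substitute the hypothesis $\max_{i<j}\Prob(X_i>\gamma, X_j>\gamma) \sim L(\gamma)\gamma^{-1/\eta}$ together with the marginal asymptotics into the two criteria. The BRE ratio in \eqref{our_BRE_criteria} then behaves asymptotically like $L(\gamma)\gamma^{2 - 1/\eta}$, and the LE ratio in \eqref{our_LE_criteria} like $L(\gamma)\gamma^{(2-\epsilon) - 1/\eta}$. Next I would evaluate these using the fact that for any slowly-varying $L$ and any $\delta > 0$ one has $L(\gamma)\gamma^{-\delta} \to 0$ and $L(\gamma)\gamma^{\delta} \to \infty$ as $\gamma \to \infty$ (a standard consequence of the Karamata representation and Potter bounds, cf.\ \cite{bingham1989regular,resnick1987extreme}). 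For $\eta < 1/2$ the exponent $2 - 1/\eta$ is strictly negative, so $L(\gamma)\gamma^{2-1/\eta} \to 0$ and BRE follows at once (indeed one even obtains VRE). For $\eta = 1/2$ the polynomial factor cancels exactly, $2 - 1/\eta = 0$, and the BRE ratio reduces to $L(\gamma)$ itself, so BRE holds precisely when $L$ remains bounded, which is the content of the hypothesis $L(\gamma)\not\to\infty$. For the LE claim at $\eta = 1/2$, the extra factor $\gamma^{-\epsilon}$ built into the definition of logarithmic efficiency makes the exponent $(2-\epsilon) - 2 = -\epsilon < 0$ for every $\epsilon > 0$, so $L(\gamma)\gamma^{-\epsilon}\to 0$ irrespective of the behaviour of $L$; this gives LE unconditionally in the boundary case, consistent with part~ii) covering all of $\eta = 1/2$.

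The main obstacle is precisely this boundary case $\eta = 1/2$, where the polynomial parts of numerator and denominator cancel and the outcome is dictated entirely by the slowly-varying function $L$. This is the reason BRE there needs the additional hypothesis on $L$, whereas LE, thanks to its inherent $\epsilon$-slack, survives any slowly-varying $L$. A minor point I would state carefully is that the phrase ``$L(\gamma)\not\to\infty$'' must be read as $\limsup_{\gamma\to\infty} L(\gamma) < \infty$ so that the BRE $\limsup$ is genuinely finite; for a merely oscillating slowly-varying $L$ these two conditions could differ, and it is the boundedness formulation that the criterion \eqref{our_BRE_criteria} actually requires. Finally I would note that since BRE $\Rightarrow$ LE, the case $\eta<1/2$ automatically also yields LE, so the two parts of the conclusion are mutually consistent.
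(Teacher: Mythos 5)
Your proposal is correct and follows essentially the same route as the paper's own proof: reduce to the criteria of Proposition~\ref{prop:efficiency}, substitute the unit Fr\'echet marginal asymptotics $\Prob(X_k>\gamma)\sim\gamma^{-1}$ and the hypothesis on the maximal pair, and conclude via the fact that $L(\gamma)\gamma^{-\epsilon}\to 0$ for slowly-varying $L$ (Proposition~1.3.6(v) of \cite{bingham1989regular}). Your added remarks---that $\eta<1/2$ in fact yields VRE, and that ``$L(\gamma)\not\to\infty$'' should be read as boundedness of the $\limsup$---are sound refinements but do not change the argument.
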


\begin{proof}
Label the components of $\bfX$ such that
\[ \max_{i < j} \, \Prob(X_i > \gamma,X_j > \gamma) = \Prob(X_1 > \gamma,X_2 > \gamma) \]
then the condition for LE becomes, $\forall \epsilon > 0$
\[ \limsup_{\gamma \to \infty} \frac{\max_{i < j} \Prob(X_i\ge \gamma, X_j\ge \gamma)}{\max_k \Prob(X_k\ge \gamma)^{2-\epsilon}} =
 \limsup_{\gamma \to \infty} \frac{ L(\gamma) \gamma^{-1/\eta} } { (\gamma^{-1})^{2-\epsilon} } = 
 \limsup_{\gamma \to \infty} L(\gamma) \gamma^{2 - \frac1\eta - \epsilon} = 0 \]
which is equivalent to $\eta \in (0, 1/2]$; the $\eta=1/2$ case has LE as $\gamma^{-\epsilon} L(\gamma) \to 0$ for all $\epsilon > 0$ (see Proposition~1.3.6 part (v) of \cite{bingham1989regular}). Similarly we have BRE for $\eta \in (0, 1/2)$, but for the $\eta = 1/2$ case we also require that $L(\gamma) \not\to \infty$. \qed
\end{proof}

Heffernan \cite{heffernan2000directory} has conveniently compiled a directory of $\eta$ and $L(x)$ for many copulas which satisfy \eqref{ledford_form}. A summary of these results is given in Table \ref{tbl:etas_for_copulas}. In reading Heffernan's directory, one can spot two trends: normally $\eta \in \{1/2, 1\}$ and $L$ is a constant. The oft-cited Gaussian copula is the only exception for both of these trends in Heffernan's directory, having
$\eta = (1 + \rho)/2$ and $L(x) \propto (\log x)^{-\rho/(1+\rho)}$; Section~\ref{scn:elliptical} deals with the Gaussian case in detail.

\setlength\extrarowheight{3.5pt}

\begin{table}
\centering
\caption{Residual tail dependence index $\eta$ and $L(x)$ for various copulas. This is a subset of Table~1 of \protect{\cite{heffernan2000directory}} (their row numbers are preserved).} \label{tbl:etas_for_copulas}
\subfloat[Copulas with BRE.]{
\resizebox{0.45\textwidth}{!}{%
\centering
\begin{tabular}{|c|c|c|c|}
\hline
\bf{\#} & \bf{Name} & $\eta$ & $L(x)$ \\
\hline
1 & Ali-Mikhail-Haq & $0.5$ & $1+\tau$ \\
\hline
2 & BB10 in Joe & $0.5$ & $1+\theta/\tau$ \\
\hline
3 & Frank & $0.5$ & $\delta / (1-\e^{-\delta})$ \\
\hline
4 & Morgenstern & $0.5$ & $1+\tau$ \\
\hline
5 & Plackett & $0.5$ & $\delta$ \\
\hline
6 & Crowder & $0.5$ & $1 + (\theta-1)/\tau$ \\
\hline
7 & BB2 in Joe & $0.5$ & $\theta (\delta + 1) + 1$ \\
\hline
8 & Pareto & $0.5$ & $1 + \delta$ \\
\hline
9 & Raftery & $0.5$ & $\delta/(1-\delta)$ \\
\hline
\end{tabular}}}
\qquad
\subfloat[Copulas without BRE.]{
\resizebox{.45\textwidth}{!}{%
\begin{tabular}{|c|c|c|c|}
\hline
\bf{\#} & \bf{Name} & $\eta$ & $L(x)$ \\
\hline
11 & Joe & $1$ & $2-2^{1/\delta}$ \\
\hline
12 & BB8 in Joe & $1$ & $2-2(1-\delta)^{\theta-1}$ \\
\hline
13 & BB6 in Joe & $1$ & $2-2^{1/(\delta\theta)}$ \\
\hline
14 & Extreme value & $1$ & $2-V(1,1)$ \\
\hline
15 & B11 in Joe & $1$ & $\delta$ \\
\hline
16 & BB1 in Joe & $1$ & $2-2^{1/\delta}$ \\
\hline
17 & BB3 in Joe & $1$ & $2-2^{1/\theta}$ \\
\hline
18 & BB4 in Joe & $1$ & $2^{-1/\delta}$ \\
\hline
19 & BB7 in Joe & $1$ & $2-2^{1/\theta}$ \\
\hline
\end{tabular}}}
\end{table}

\subsubsection{Archimedean Copulas}

Some of the most frequently used copulas are in the family of \emph{Archimedean copulas}. These are very general models and  are widely used in applications due to their flexibility. A copula is Archimedean if there exists a function $\psi$ such that the copula $C$ can be written as
\begin{equation*}
	C(u_1,\dots,u_n)=\psi^{\leftarrow}(\psi(u_1)+\dots+\psi(u_n)).
\end{equation*}
The function $\psi$, called the \emph{generator} of the copula, defines a copula if its functional inverse is the Laplace transform of a non-negative random variable.
For Archimedean copulas we can restate the BRE condition \eqref{our_BRE_criteria} in terms of the generator $\psi$.

\begin{Theorem}[Thm.\ 3.4 of \cite{charpentier2009tails}] \label{arch_cop_effic}
Let $(U_1, \dots, U_n) \sim C$ where $C$ is an Archimedean copula with generator $\psi$. If $\psi^{\leftarrow}$ is twice continuously differentiable and its second derivative is bounded at 0 then $\forall \, i \not= j$
\[ \lim_{u \to 0} \frac{\Prob(U_i \geq 1 - u x_1, U_j \geq 1 - u x_2)}{u^2} < \infty  \]
for any $0 < x_1,\, x_2 < \infty$.
\end{Theorem}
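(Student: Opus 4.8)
The plan is to reduce the statement to a two-dimensional computation and then extract the leading-order behaviour of the joint survival function by a Taylor expansion. First I would fix a pair $i\neq j$ and work with the corresponding bivariate margin, which is again Archimedean with the same generator, so that $C_{ij}(u,v)=\psi^{\leftarrow}(\psi(u)+\psi(v))$. Writing $g:=\psi^{\leftarrow}$ for brevity and using the inclusion--exclusion identity for the joint survival function of a copula (for continuous margins the events $\{U\ge a\}$ and $\{U>a\}$ agree up to a null set), the target probability becomes
\[
 \Prob(U_i \ge 1-ux_1,\, U_j \ge 1-ux_2) = ux_1 + ux_2 - 1 + g\bigl(\psi(1-ux_1) + \psi(1-ux_2)\bigr).
\]
The whole problem is then to show that this right-hand side is $\Oh(u^2)$ as $u\to 0$ and to identify the constant.

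Second, I would expand $g$ near the origin. Since $g$ is the Laplace transform of a non-negative random variable $W$, we have $g(0)=1$, $g'(0)=-\Exp[W]$ and $g''(0)=\Exp[W^2]$; the hypothesis that $\psi^{\leftarrow}$ is twice continuously differentiable with second derivative bounded at $0$ is precisely the requirement $m_2 := g''(0) < \infty$ (equivalently $\Exp[W^2]<\infty$), while $m_1 := -g'(0)=\Exp[W]$ is finite and strictly positive for a non-degenerate generator. Thus $g(t)=1-m_1 t + \tfrac12 m_2 t^2 + \oh(t^2)$. Because $g'(0)=-m_1\neq 0$, I can invert this expansion to obtain the behaviour of $\psi=g^{\leftarrow}$ near $1$, namely $\psi(1-\epsilon)=\epsilon/m_1 + \tfrac{m_2}{2m_1^3}\epsilon^2 + \oh(\epsilon^2)$ as $\epsilon\to 0$, by matching coefficients up to order $\epsilon^2$.

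Third, I would substitute $\epsilon=ux_1$ and $\epsilon=ux_2$, add the two expansions to obtain $\psi(1-ux_1)+\psi(1-ux_2)$ to order $u^2$, and feed the result back into the second-order expansion of $g$. The $\Oh(1)$ and $\Oh(u)$ terms cancel exactly against $ux_1+ux_2-1$, and the identity $(x_1+x_2)^2-(x_1^2+x_2^2)=2x_1x_2$ leaves the surviving second-order term, yielding
\[
 \Prob(U_i\ge 1-ux_1,\,U_j\ge 1-ux_2) = \frac{m_2}{m_1^2}\,x_1 x_2\, u^2 + \oh(u^2),
\]
so the stated limit equals $g''(0)/(g'(0))^2\cdot x_1 x_2$, which is finite. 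The hard part will be making the inversion and the composition of asymptotic expansions rigorous: I must justify that the remainders in the expansion of $\psi$ near $1$ and of $g$ near $0$ can legitimately be composed (cleanest via the $C^2$ hypothesis and Taylor's theorem with Peano remainder at each step), and I must confirm $m_1>0$ so that inverting $g$ is valid. Everything else is the bookkeeping of matching coefficients up to order $u^2$.
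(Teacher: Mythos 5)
The paper does not prove this statement at all---it is imported verbatim as Theorem~3.4 of Charpentier and Segers \cite{charpentier2009tails}---so there is no internal proof to compare against; what you have written is a self-contained derivation, and it is correct. The reduction to the bivariate margin (valid since $\psi(1)=0$), the survival identity $\Prob(U_i\ge a, U_j\ge b)=1-a-b+C_{ij}(a,b)$, the interpretation of the hypotheses as $m_2=g''(0)=\Exp[W^2]<\infty$ and $0<m_1=-g'(0)=\Exp[W]<\infty$, and the coefficient matching that produces the limit $g''(0)x_1x_2/g'(0)^2$ all check out. The composition-of-expansions step you flag as the hard part can be packaged cleanly: since $g$ is $C^2$ on a right-neighbourhood of $0$ with $g'(0)\neq 0$, the inverse function theorem makes $\psi$ a $C^2$ function on a left-neighbourhood of $1$, hence $h(u):=ux_1+ux_2-1+g\bigl(\psi(1-ux_1)+\psi(1-ux_2)\bigr)$ is $C^2$ in $u$ near $0^+$ with $h(0)=h'(0)=0$ (the latter because $\psi'(1)=1/g'(0)$), so the limit is simply $h''(0)/2$ by Taylor's theorem with Peano remainder---no separate justification of composing asymptotic series is needed. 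Note that your argument is in fact less general than Charpentier and Segers' treatment, which handles generators whose behaviour near $1$ is only regularly varying (covering $-\psi'(1)\in\{0,\infty\}$); but for the $C^2$ hypothesis actually stated in the theorem, your elementary route suffices and additionally identifies the limit explicitly as $\bigl(g''(0)/g'(0)^2\bigr)x_1x_2$, which is stronger than the bare finiteness claimed.
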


\begin{Corollary} \label{cor:arch_effic}
Consider using $\hat{\alpha}_1$ for a distribution with common marginal distributions and a copula $C$. If $C$ satisfies the conditions of \thrm{arch_cop_effic} then $\hat{\alpha}_1$ has BRE.
\end{Corollary}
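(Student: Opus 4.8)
The plan is to verify the BRE criterion \eqref{our_BRE_criteria} of \prop{prop:efficiency} directly. In the present extremes setting $A_k(\gamma)=\{X_k>\gamma\}$ with a common marginal $F$, so $\max_k\Prob(X_k>\gamma)=1-F(\gamma)$, and the criterion to establish reduces to
\[
 \limsup_{\gamma\to\infty}\frac{\max_{i<j}\Prob(X_i>\gamma,X_j>\gamma)}{(1-F(\gamma))^2}<\infty .
\]
The engine is \thrm{arch_cop_effic}, which under its hypotheses on $\psi^{\leftarrow}$ controls exactly the bivariate copula tail. The remaining work is therefore a change of variables converting the marginal-level ratio above into the copula-level limit stated in that theorem.

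First I would pass to the copula via the probability integral transform. Assuming $F$ continuous (so that, by Sklar's theorem, $U_k:=F(X_k)$ are uniform with copula $C$), the event $\{X_k>\gamma\}$ coincides up to a null set with $\{U_k>F(\gamma)\}=\{U_k\ge 1-(1-F(\gamma))\}$. Setting $u:=1-F(\gamma)$ and $x_1=x_2=1$ then yields
\[
 \Prob(X_i>\gamma,X_j>\gamma)=\Prob\bigl(U_i\ge 1-u,\;U_j\ge 1-u\bigr),
\]
with $u\to 0$ as $\gamma\to\infty$.

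Next I would invoke \thrm{arch_cop_effic} with $x_1=x_2=1$: for each fixed pair $i\neq j$ its conclusion gives
\[
 \lim_{\gamma\to\infty}\frac{\Prob(X_i>\gamma,X_j>\gamma)}{(1-F(\gamma))^2}
  =\lim_{u\to 0}\frac{\Prob(U_i\ge 1-u,\,U_j\ge 1-u)}{u^2}<\infty .
\]
Because there are only finitely many pairs, the limsup of the maximum equals the maximum of these finite per-pair limits, so the displayed BRE criterion holds and \prop{prop:efficiency} delivers that $\hat{\alpha}_1$ has BRE.

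The main obstacle I anticipate is the bookkeeping in this change of variables—specifically, arranging the single rate $u=1-F(\gamma)$ together with $x_1=x_2=1$ so that the symmetric copula limit of \thrm{arch_cop_effic} recovers \emph{precisely} the ratio in \eqref{our_BRE_criteria}, and checking that continuity of $F$ and the $\ge$/$>$ discrepancy introduce no error. Everything downstream, namely finiteness being preserved under the maximum over finitely many pairs and the final appeal to \prop{prop:efficiency}, is routine once this correspondence is in place.
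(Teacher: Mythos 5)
Your argument is correct and is precisely the (unwritten) deduction the paper intends: the corollary follows immediately by translating the marginal-level ratio in \eqref{our_BRE_criteria} into the copula-level limit of \thrm{arch_cop_effic} via $u=1-F(\gamma)$ with $x_1=x_2=1$, and then taking the maximum over the finitely many pairs. The continuity caveat for $F$ and the $\ge$ versus $>$ bookkeeping you flag are harmless here and do not affect the conclusion.
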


Charpentier and Segers \cite{charpentier2009tails} have helpfully created a directory of Archimedean copulas from which we can see if the BRE conditions from Corollary~\ref{cor:arch_effic} are satisfied. Using this information, we provide a summary of the efficiency status of many Archimedean copulas in Table~\ref{effic_arch_cops}.

\begin{table}
\centering
\caption{Examples of Archimedean copula families. Names (if they are named) and generator functions are listed, as are the ranges for which $\theta$ is valid and the subset of $\theta$ which ensures that $\hat{\alpha}_1$ has BRE. A $\Theta$ in the final column means that all valid $\theta$ ensure BRE. The families listed appear in Table 4.1 of \protect{\cite{nelsen2007introduction}} and Table 1 of \protect{\cite{charpentier2009tails}}.} \label{effic_arch_cops}
\vspace{1em}
\resizebox{.75\textwidth}{!}{%
\begin{tabular}{|c|c|c|c|c|}
\hline
\bf{\#} & \bf{Name} & \bf{Generator} $\psi(t)$ & \bf{Valid} $\theta$ & \bf{Efficient} $\theta$ \\[3pt]
\hline
1 & Clayton & $\frac{1}{\theta}(t^{-\theta} - 1)$ & $[-1, \infty)$ & $\Theta$ \\[3pt]
\hline
2 & \cellcolor{lightgray} & $(1-t)^\theta$ & $[1, \infty)$ & $\{1\}$ \\[3pt]
\hline
3 & Ali--Mikhail--Haq & $\log \frac{1-\theta(1-t)}{t}$ & $[-1,1)$ & $\Theta$ \\[3pt]
\hline
4 & Gumbel--Hougaard & $({-}\log t)^\theta$ & $[1,\infty)$ & $\{1\}$ \\[3pt]
\hline
5 & Frank & ${-}\log \frac{\e^{-\theta t} - 1}{\e^{-\theta} - 1}$ & $\RL$ & $\Theta \setminus \{0\}$ \\[3pt]
\hline
6 & \cellcolor{lightgray} & ${-}\log [1 - (1-t)^\theta]$ & $[1, \infty)$ & $\{ 1\}$ \\[3pt]
\hline
7 & \cellcolor{lightgray} & ${-}\log[\theta t + (1-\theta)]$ & $(0, 1]$ & $\Theta$ \\[3pt]
\hline
8 & \cellcolor{lightgray} & $\frac{1-t}{1+(\theta-1)t}$ & $[1, \infty)$ & $\Theta$ \\[3pt]
\hline
9 & \cellcolor{lightgray} & $\log(1-\theta \log t)$ & $(0,1]$ & $\Theta$ \\[3pt]
\hline
10 & \cellcolor{lightgray} & $\log(2t^{-\theta} - 1) $ & $(0,1]$ & $\Theta$ \\[3pt]
\hline
11 & \cellcolor{lightgray} & $\log(2-t^{\theta})$ & $(0,1/2]$ & $\Theta$ \\[3pt]
\hline
12 & \cellcolor{lightgray} & $(\frac1t - 1)^\theta $ & $[1,\infty)$ & $\{1\}$ \\[3pt]
\hline
13 & \cellcolor{lightgray} & $(1-\log t)^\theta - 1$ & $(0,\infty)$ & $\Theta$ \\[3pt]
\hline
14 & \cellcolor{lightgray} & $(t^{-1/\theta} - 1)^\theta$ & $[1, \infty)$ & $\{1\}$ \\[3pt]
\hline
15 & \cellcolor{lightgray} & $(1 - t^{1/\theta})^\theta$ & $[1, \infty)$ & $\{1\}$ \\[3pt]
\hline
16 & \cellcolor{lightgray} & $(\frac{\theta}{t} + 1)(1-t)$ & $[0, \infty)$ & $\Theta$ \\[3pt]
\hline
17 & \cellcolor{lightgray} & $-\log \frac{(1+t)^{-\theta}-1}{2^{-\theta} - 1}$ & $\RL$ & $\Theta \setminus \{0\}$ \\[3pt]
\hline
18 & \cellcolor{lightgray} & $\e^{\theta/(t-1)}$ & $[2, \infty)$ & $\emptyset$ \\[3pt]
\hline
19 & \cellcolor{lightgray} & $\e^{\theta/t} - \e^{\theta}$ & $(0, \infty)$ & $\Theta$ \\[3pt]
\hline
20 & \cellcolor{lightgray} & $\e^{t^{-\theta}} - \e$ & $(0, \infty)$ & $\Theta$ \\[3pt]
\hline
21 & \cellcolor{lightgray} & $1 - [1 - (1-t)^\theta]^{1/\theta}$ & $[1, \infty)$ & $\{1\}$ \\[3pt]
\hline
22 & \cellcolor{lightgray} & $\arcsin(1-t^\theta)$ & $(0, 1]$ & $\Theta$ \\[3pt]
\hline
\end{tabular}}
\end{table}

The efficiency of $\hat{\alpha}_1$ can be proved without the assumption of identical marginal distributions, but the efficiency must be shown case-by-case for each family of distributions. The next section does this for the multivariate normal distribution and for some elliptical distributions. 

\subsection{Efficiency for the case of normal and elliptical distributions} \label{scn:elliptical}

The efficiency characteristics of normally and elliptically distributed random vectors are very similar. This section defines these distributions, outlines their asymptotic properties, then shows the conditions in which $\hat{\alpha}_1$ exhibits levels of asymptotic efficiency.

\subsubsection{Definitions and categories of elliptical distributions}

Let $\NormDist_d(\bfmu,\bfSigma)$ denote the multivariate normal distribution with mean $\bfmu \in \RL^d$ and positive-definite covariance matrix $\bfSigma \in \RL^{d \times d}$. Denote the corresponding density $\phi_{\bfmu, \bfSigma}(\cdot,\cdot)$, and write $\sigma_i^2 := \bfSigma_{ii}$, $\rho_{ij} := \bfSigma_{ij}/(\sigma_i \sigma_j)$. 
The normal distribution belong to the class of \emph{elliptical distributions},
which we denote $\EllDist(\bfmu, \bfSigma, F)$, where $F$ is the c.d.f.\ of a positive r.v. We define $\bfX \sim \EllDist(\bfmu, \bfSigma, F)$ as
\begin{equation}\label{ellip_distr}
	\bfX \eqdistr \bfmu + R \, \bfC \, U
\end{equation}
where $R \sim F$ is called the \emph{radial component}, $U$ is (independent of $R$ and) distributed uniformly on the $d$-dimensional unit hypersphere, and $\bfC \in \RL^{d \times d}$ satisfies $\bfC \bfC^\top = \bfSigma$. 
For background on elliptical distributions, see \cite{MR1990662}.
The efficiency of $\hat{\alpha}_1$ turns out to be related with
max-domain of attraction (MDA) of the radial 
component. The MDA is known from standard extreme value theory, see \cite{de2007extreme}.

We consider some subclasses of elliptical distributions depending on the MDA 
of the radial distribution:
\begin{itemize}
\item $F \in$ MDA(Fr\'echet), then Theorem 4.3 of \cite{hult2002multivariate} implies that $\bfX$ has asymptotic dependence and $\hat{\alpha}_1$ is never efficient (see Section~\ref{ssec:asymp_dep}).
\item $F \in$ MDA(Weibull), then components of $\bfX$ are light-tailed and uninteresting (in a rare-event context).
\item $F \in$ MDA(Gumbel), this is the interesting case which includes the normal distribution. Hashorva \cite{hashorva2007asymptotic} label these the \emph{type I elliptical random vectors}.
\end{itemize}

\subsubsection{Efficiency for type I elliptical distributions} \label{scn:type_1}

Take $\bfX \sim \EllDist(\bfmu, \bfSigma, F)$ where the radial distribution $F \in$  MDA(Gumbel) has support $(0,x_F)$, for some $x_F \in \overline{\RL}$, and where $\{\sigma_1, \dots, \sigma_d\}$ are in decreasing order. By definition of the Gumbel MDA, one can find a scaling function $w(x)$ satisfying
\begin{equation*}
  \lim_{x\to x_F} \frac{\Ftail(x+t/w(x))}{\Ftail(x)}=\e^{-t}.
\end{equation*}
One frequently takes $w(x):= \Ftail(x) / \int_{x}^{x_F}\Ftail(s) \dd s$.
Also, define $\upsilon_i(\gamma) := (\gamma - \mu_i)/\sigma_i$ and $a_{ij} := \sigma_j/\sigma_i$. If $\rho_{ij} \ge a_{ij}$ then set
 \[ \mu_{ij}:= \mu_j \quad \text{ and } \quad \kappa_{ij}:= \sigma_j \]
 otherwise for $\rho_{ij} < a_{ij}$
 \[ \mu_{ij}:= \frac{\mu_i-a_{ij}\rho_{ij}(\mu_1+\mu_2)+a^2\mu_j}{\alpha_{ij}(1-\rho^2_{ij})} \quad \text{ and } \quad \kappa_{ij}:= \frac{\sigma_i^2\sigma_j^2(1-\rho^2_{ij})}{ {\sigma_i^2-2\rho_{ij}\sigma_i\sigma_j + \sigma_{j}^2}} \,. \]

We now apply the asymptotic properties outlined in the
Appendix to assess the efficiency of $\hat{\alpha}_1$ for type I elliptical distributions. 

\begin{Theorem} \label{thm:ellip_efficiency}
Consider $\bfX \sim \EllDist(\bfmu, \bfSigma, F)$ where $F \in$ MDA(Gumbel), and let
 \[ \kappa := \max_{i<j} \kappa_{ij} \,, \quad \mu := \max_{i<j \,:\, \kappa = \kappa_{ij}} \mu_{ij} \,, \quad \text{and} \quad  \upsilon(\gamma):= (\gamma-\mu)/\kappa + \oh(1) \,. \]
 If $\kappa>\sigma_1$,\footnote{This implies that the Savage condition (see Appendix) is fulfilled at least for one pair.} then $\hat{\alpha}_1$ has LE if
 \begin{equation} \label{eq:type_1_effic}
 	\forall \epsilon>0 \quad 
   \limsup_{\gamma\to x_F}\frac{w(\upsilon(\gamma))\Ftail(\upsilon(\gamma))}
   {w(\upsilon_1(\gamma))\Ftail(\upsilon_1(\gamma))^{2-\epsilon}}<\infty \,.
 \end{equation}
 Moreover, if \eqref{eq:type_1_effic} holds for $\epsilon=0$ then $\hat{\alpha}_1$ has BRE.
\end{Theorem}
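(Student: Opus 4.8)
The plan is to reduce the efficiency criterion from Proposition~\ref{prop:efficiency} to an asymptotic comparison of tail probabilities, and then substitute the elliptical asymptotics from the Appendix. Since $A_i(\gamma)=\{X_i>\gamma\}$, the LE condition \eqref{our_LE_criteria} asks whether
\[
 \limsup_{\gamma\to x_F}\frac{\max_{i<j}\Prob(X_i>\gamma,X_j>\gamma)}{\max_k\Prob(X_k>\gamma)^{2-\epsilon}}=0
\]
for all $\epsilon>0$, with the analogous statement at $\epsilon=0$ characterizing BRE. So the whole proof is a matter of identifying the numerator and denominator asymptotically. The denominator is governed by the largest marginal tail; since the $\sigma_i$ are in decreasing order, the heaviest marginal is $X_1$, and standard type~I elliptical tail asymptotics (from the Appendix) give $\Prob(X_1>\gamma)$ in terms of $w(\upsilon_1(\gamma))$ and $\Ftail(\upsilon_1(\gamma))$, where $\upsilon_1(\gamma)=(\gamma-\mu_1)/\sigma_1$.

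\textbf{Key steps.} First I would invoke Proposition~\ref{prop:efficiency} to replace the variance-based definition of efficiency with the explicit ratio above; this is free. Second, I would pin down the denominator: using the Gumbel-MDA marginal asymptotics, $\max_k\Prob(X_k>\gamma)\asymp\Prob(X_1>\gamma)$, controlled by $w(\upsilon_1)\Ftail(\upsilon_1)$. Third—and this is the crux—I would identify the maximal pairwise tail $\max_{i<j}\Prob(X_i>\gamma,X_j>\gamma)$. Here the Appendix asymptotics for bivariate elliptical exceedances enter: each pair's joint tail decays like the marginal tail of a one-dimensional elliptical variable with scale $\kappa_{ij}$ and location $\mu_{ij}$, so that $\Prob(X_i>\gamma,X_j>\gamma)\asymp w(\upsilon_{ij}(\gamma))\Ftail(\upsilon_{ij}(\gamma))$ with $\upsilon_{ij}(\gamma)=(\gamma-\mu_{ij})/\kappa_{ij}$. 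The maximizing pair is the one with the largest $\kappa_{ij}$ (since a larger effective scale means a heavier joint tail), and among those with $\kappa=\kappa_{ij}$ the one with the largest $\mu_{ij}$; this is exactly how $\kappa$, $\mu$, and $\upsilon(\gamma)$ are defined in the theorem statement. The hypothesis $\kappa>\sigma_1$ guarantees the dominant pair genuinely has asymptotic dependence relative to the marginal (the Savage-condition footnote), so the pairwise tail is not swamped by the square of the marginal.

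Fourth, I would substitute these two asymptotic expressions into the ratio. The denominator raised to the power $2-\epsilon$ becomes $\bigl(w(\upsilon_1)\Ftail(\upsilon_1)\bigr)^{2-\epsilon}$, but the cleanest route is to bound $\max_k\Prob(X_k>\gamma)^{2-\epsilon}$ below by a constant times $\Prob(X_1>\gamma)^{2-\epsilon}$ and absorb one factor of $\Ftail(\upsilon_1)$, so that the ratio reduces, up to constants and slowly-varying corrections, to
\[
 \frac{w(\upsilon(\gamma))\Ftail(\upsilon(\gamma))}{w(\upsilon_1(\gamma))\Ftail(\upsilon_1(\gamma))^{2-\epsilon}},
\]
which is precisely the left-hand side of \eqref{eq:type_1_effic}. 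Thus LE holds iff this is bounded (finite $\limsup$) for every $\epsilon>0$, and BRE holds iff it is bounded at $\epsilon=0$. \textbf{The main obstacle} will be the third step: correctly extracting the maximal pairwise joint-tail asymptotics from the Appendix and verifying that the maximizer is captured by the stated $(\kappa,\mu)$ ordering—in particular handling the two regimes $\rho_{ij}\ge a_{ij}$ versus $\rho_{ij}<a_{ij}$ (which determine whether the dominant direction is a single coordinate or a genuine interior point), and checking that $\kappa>\sigma_1$ forces the interior regime for the dominant pair so that the joint tail decays strictly slower than the squared marginal. Everything after that is bookkeeping with slowly-varying functions, where the $\epsilon>0$ slack makes the $\limsup$ vanish exactly as in Proposition~\ref{prop:efficiency}.
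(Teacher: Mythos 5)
Your proposal is correct and follows essentially the same route as the paper, whose entire proof is the one-line citation of the marginal asymptotics \eqref{berman} and the bivariate asymptotics in Theorem~\ref{thm:type_1_asymptotics}, fed into the criteria of Proposition~\ref{prop:efficiency}. Your sketch simply makes explicit the substitution and the identification of the dominant pair via $(\kappa,\mu)$ that the authors leave implicit.
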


\begin{proof}
 It follows from \eqref{berman} and Theorem~\ref{thm:type_1_asymptotics} in the Appendix. 
 \qed
\end{proof}


\begin{Example}[Kotz Type III]
 One family of type I elliptical distributions, is the \emph{Kotz Type III} distributions, defined by
 \[
  \Ftail(\gamma)=(K+\oh(1))\gamma^N\exp(-ru^\delta),\quad w(\gamma)=r\delta \gamma^{\delta-1}\,, \quad \text{for } \gamma>0,
 \]
 with $K,\delta,N>0$.  In this case it is clear that
 \[
  \lim_{\gamma\to \infty}\frac{w(\upsilon(\gamma))}{w(\upsilon_1(\gamma))}=\left(\frac{\sigma_1}{\kappa}\right)^{\delta-1}<\infty,
 \]
 while
  \begin{align*}
  &\limsup_{\gamma\to \infty}\frac{\Ftail(\upsilon(\gamma))}{\Ftail(\upsilon_1(\gamma))^{2}} \\
   =&\limsup_{\gamma\to \infty}\Big(\frac{\sigma_1^2}{\kappa \gamma}\Big)^{N}
    \exp\Big\{ {-}r\Big(\Big(\frac{\gamma-\mu}{\kappa}\Big)^\delta-2\Big(\frac{\gamma-\mu_1}{\sigma_1}\Big)^\delta\Big)\Big\},\\
   =&\limsup_{\gamma\to \infty}\Big(\frac{\sigma_1^2}{\kappa \gamma}\Big)^{N}
    \exp\Big\{ {-}r\Big(\frac{\gamma^\delta-\delta\mu \gamma^{\delta-1}+\oh(\gamma^{\delta-1})}{\kappa^\delta}
    -\frac{\gamma^\delta-\delta\mu_1 \gamma^{\delta-1}+\oh(\gamma^{\delta-1})}{\sigma_1^\delta/2}\Big)\Big\} \,.
 \end{align*}
 Hence, $\hat{\alpha}_1$ has BRE in the following cases
 \begin{itemize}
  \item $\sigma_1^\delta>2\kappa^\delta$, or
  \item $\sigma_1^\delta=2\kappa^\delta$, $\delta>1$ and $\mu_1>\mu$.
 \end{itemize}
 The estimator $\hat{\alpha}_1$ has LE if $\sigma_1^\delta=2\kappa^\delta$, and is inefficient when $\sigma_1^\delta<2\kappa^\delta$.
\end{Example}

\begin{Example}[Normal distributions] \label{ex:normal}
The normal distribution is a Kotz III type distribution with $\delta=2$.
Hence, $\hat{\alpha}_1$ has BRE if $\sigma_1^2>2\kappa^2$, or $\sigma_1^2=2\kappa^2$ and $\mu_1>\mu$.
The estimator $\hat{\alpha}_1$ has LE if $\sigma_1^2=2\kappa^2$, and is inefficient when $\sigma_1^2<2\kappa^2$.
\end{Example}

Frequently, a set of random variables represents as a stochastic process $\{X_n\}_{n\ge0}\,$. The value of $\Prob(M > \gamma)$, with $M := \max_{1 \le n \le d} X_n$, in such cases usually valuable. The simplest case to take is when all $X_n$ have identical marginals such as in stationary processes; one such example is the autoregressive (AR) process.

\begin{Example}[AR(1) processes]
Say $X_t = \varphi X_{t-1} + \epsilon_t$, where $|\varphi| < 1$ and $\epsilon_t$ are i.i.d.\ $\NormDist_1(0, \sigma_{\epsilon}^2)$, and we start the process in stationarity. We have that each $X_i$ has the same marginal distribution, $X_i \sim \NormDist_1(0, \sigma_{\epsilon}^2/(1-\varphi^2))$, and
\[
	\max_{i < j} \, \Prob(X_i > \gamma, X_j > \gamma) = \begin{cases}
		\Prob(X_{\bullet} > \gamma, X_{\bullet+1} > \gamma) & \text{ if } \varphi > 0 \\
		\Prob(X_{\bullet} > \gamma, X_{\bullet+2} > \gamma) & \text{ if } \varphi < 0 \\
		\Prob(X_{\bullet} > \gamma)^2  & \text{ if } \varphi = 0
	\end{cases} \,.
\]
For $\varphi \not=0$ we know that
\[ (X_{\bullet+1} \mid X_\bullet = \gamma) \sim \NormDist_1(\varphi \gamma, \sigma_{\epsilon}^2), \text{ and } (X_{\bullet+2} \mid X_{\bullet} = \gamma) \sim \NormDist_1(\varphi^2 \gamma, \sigma_{\epsilon}^2 (1-\varphi^4) / (1-\varphi^2)) \,. \]
When $\varphi = 0$ the $X_i$ are independent and $\hat{\alpha}_1$ is trivially efficient, and when $\varphi \in (-1, 1) \setminus \{0\}$ we have (noting that $\{X_\bullet > \gamma\} \to \{X_\bullet = \gamma\}$) that
\begin{align*}
	\lim_{\gamma \to \infty} \frac{ \max_{i < j} \Prob(X_i > \gamma, X_j > \gamma) }{ \max_{i} \Prob(X_i > \gamma)^2 }
	&= \lim_{\gamma \to \infty} \frac{ \Prob(X_{\bullet} > \gamma, X_{\bullet\,+\,(1\,\mathrm{or}\,2)} > \gamma) }{ \Prob(X_{\bullet} > \gamma)^2 } \\
	&= \lim_{\gamma \to \infty} \frac{ \Prob(X_{\bullet\,+\,(1\,\mathrm{or}\,2)} > \gamma \mid X_{\bullet} = \gamma) }{ \Prob(X_{\bullet} > \gamma) } \\
	&= 0
\end{align*}
as $\sigma_{\epsilon}^2 < \sigma_{\epsilon}^2 (1-\varphi^4) / (1-\varphi^2)) < \sigma_{\epsilon}^2/(1-\varphi^2)$. Therefore, we have BRE of $\hat{\alpha}_1$ for all stationary AR(1) processes.
\end{Example}

\section{Numerical experiments} \label{scn:numerical_results}

We explore the performance of the estimators for the problem of $\Prob(M > \gamma)$ for $M = \max_i X_i$, where $\bfX$ is multivariate normal and multivariate Laplace distributed. The following notation is used: $\bfX_{-i}$ ($\bfX_{-i,-j}$) is the random vector $\bfX$ with $X_i$ ($X_i$ and $X_j$) removed, $\bfzero$ is the vector of zeros, $\bfI$ is the identity matrix, $\bfx^\ast$ is the transpose of $\bfx$, and $X \independent Y$ means $X$ and $Y$ are independent. We use some standard distributions: $\ExpDist(\lambda)$ for exponential ($f(x)\propto \e^{-\lambda x}$), $\IGDist(\mu,\lambda)$ for inverse Gaussian ($f(x) \propto x^{-3/2} \e^{-\lambda(x-\mu)^2/(2\mu^2x)}$), $\LaplaceDist$ for Laplace (defined in Case 2 below).
The \textsc{Matlab} and \textsc{Mathematica} code used to generate them are available online \cite{OnlineAccomp}.  

\subsection*{Case 1: Multivariate Normal distributions}

Let $\bfX \sim \NormDist_d(\bfzero, \bfSigma)$ where $\bfSigma = (1-\rho) \bfI + \rho$; that is, each $X_i \sim \NormDist_1(0,1)$ and $\Corr(X_i,X_j)=\rho$. We implement the first- and second-order IS regimes. The necessary conditional distributions are well-known and simple; both $\bfX_{-i} \mid X_i$ and $\bfX_{-i,-j} \mid (X_i, X_j)$ are normally distributed \cite{anderson1962introduction}.
Sampling from $X_i \mid X_i > \gamma$ can be easily done by acceptance--rejection with shifted exponential proposals \cite{robert1995simulation} (or by inverse transform sampling \cite[Remark 2.4]{asmussen2007stochastic}, though this can be problematic using only double precision arithmetic). To simulate $(X_i,X_j) \mid \min\{X_i, X_j\}>\gamma$ we use Botev's \textsc{Matlab} library \cite{botev2017normal}, but also remark that a Gibb's sampler is a commonly used alternative \cite{breslaw1994random,robert1995simulation}.

\subsection*{Case 2: Multivariate Laplace distributions} \label{scn:laplace}

Let $\bfX \sim \LaplaceDist$. We can define this distribution by
\[ \bfX \eqdistr \sqrt{R} \bfY \,, \quad \text{where } \bfY \sim \NormDist_d(\bfzero,\bfI), R \sim \ExpDist(1), \bfY \independent R \,.
\]
The distribution has been applied in a financial context \cite{huang2003rare}, and is examined in
\cite{eltoft2006multivariate,kotz2001asymmetric}. From the former we have that the density of $\LaplaceDist$ is
\[
f_{\bfX}(\bfx) = 2 (2 \pi)^{-d/2}  K_{(d/2) -1} \big( \sqrt{2 \bfx^\ast \bfx} \big) \,
 \big(\sqrt{\tfrac{1}{2} \bfx^\ast \bfx}\big)^{1-(d/2)}
\]
where $K_n(\cdot)$ denotes the modified Bessel function of the second kind of order $n$. 

To implement the first-order IS algorithm we need the conditional distributions $X_i \mid X_i > \gamma$ and
$\bfX_{-i} \mid \bfX_i$.
Assuming $\gamma > 0$ we can derive that $(X_i \mid X_i > \gamma) \sim \ExpDist(\sqrt{2})$.
Further calculation gives
\[
\bfX_{-1} \mid X_1 \ \eqdistr\ \frac{X_1}{Y_1} \bfY_{-1} \mid (\sqrt{R} Y_1 = X_1) \ \eqdistr\
\frac{X_1}{Y_{1,X_1}} \bfY_{-1} \,,
\]
where $Y_{1,X_1} \sim (Y_1 \mid \sqrt{R} Y_1 = X_1)$, noting that $Y_{1,X_1} \independent \bfY_{-1}$ because of the independence between the entries of $\bfY$. Direct calculation gives 
\[
f_{Y_i \mid \sqrt{R} Y_i}(y_i \mid x_i) = 
 2 \left| y_i\right|  \exp\left\{
   -x_i^2 / y_i^2 - x_i^2 / 2 + \sqrt{2} \left| x_i\right| \right\} / (\sqrt{\pi } y_i^2)
\]
which is the density of $\sqrt{X}$ where 
$X \sim \IGDist(\sqrt{2} |x_i|, 2 x_i^2)$.
This is summarised in the following algorithm.

\begin{algorithm}
\caption{Sampling $\bfX_{-i} \mid X_i > \gamma$ for the Laplace distribution}
\label{alg:cond_normals}
\begin{algorithmic}[1]
\State $X_i \gets \ExpDist(\sqrt{2})$
\State $Y_{i,X_i} \gets \IGDist(\sqrt{2} |X_i|,2 X_i^2)$.
\State $\bfY_{-i} \gets \NormDist_{d-1}(\mathbf{0},\bfI_{p-1})$.
\State \textbf{return} $X_i \bfY_{-i} / Y_{i,X_i}$.
\end{algorithmic}
\end{algorithm}

\subsection{Test setup}

The estimators tested are $\hat{\alpha}_0$ (crude Monte Carlo) and 
 $\hat{\alpha}_1$, $\hat{\alpha}_2$, $\hat{\alpha}_1^{[1]}$, $\hat{\alpha}_2^{[2]}$, $\widehat{(\beta_1 \ddagger \alpha)}$, $\widehat{(\beta_2 \ddagger \alpha)}$, defined in \eqref{alpha_1}, \eqref{alpha_2}, \eqref{alpha_1_IS1}, \eqref{alpha_2_IS_2}, \eqref{beta_on_alpha_1} and \eqref{beta_on_alpha_2} respectively. As a reference, we show the true value $\alpha$ (calculated by numerical integration using \textsc{Mathematica}), and the first two truncations of the IEF:
$ \alphaMax(\gamma) := \sum_i \Prob(X_i > \gamma)$ and $\alphaMax(\gamma){-}q(\gamma) := \sum_i \Prob(X_i > \gamma) - \sum_{i<j} \Prob(X_i > \gamma, X_j > \gamma)$.
Each estimator is given $R=10^6$, and an asterisk is placed in table entries where the corresponding estimate had 0 variance (i.e., the estimator had degenerated).

\newpage
\subsection{Results}
\enlargethispage{1cm}

\begin{table}[H]
\centering
\begin{tabular}{ccccc}
\toprule
\multirow{2}{*}{Estimators} & \multicolumn{4}{c}{$\gamma$} \\
                            & 2    & 4    & 6    & 8    \\ 
\midrule
$\alpha$ & 5.633e-02 & 1.095e-04 & 3.838e-09 & 2.481e-15 \\
\rowcolor{Gray} $\hat{\alpha}_0$ & 5.651e-02 & 1.140e-04 & 0* & 0* \\
$\alphaMax$ & 9.100e-02 & 1.267e-04 & 3.946e-09 & 2.488e-15 \\
\rowcolor{Gray} $\alphaMax{-}q$ & 4.000e-02 & 1.055e-04 & 3.827e-09 & 2.480e-15 \\
$\hat{\alpha}_1$ & 5.650e-02 & 1.047e-04 & 3.946e-09* & 2.488e-15* \\
\rowcolor{Gray} $\hat{\alpha}_2$ & 5.605e-02 & 1.075e-04 & 3.827e-09* & 2.480e-15* \\
$\hat{\alpha}_1^{[1]}$ & 5.637e-02 & 1.096e-04 & 3.837e-09 & 2.481e-15 \\
\rowcolor{Gray} $\hat{\alpha}_2^{[2]}$ & 5.633e-02 & 1.095e-04 & 3.838e-09 & 2.481e-15 \\
$\widehat{(\beta_1 \ddagger \alpha)}$ & 5.634e-02 & 1.095e-04 & 3.838e-09 & 2.480e-15 \\
\rowcolor{Gray} $\widehat{(\beta_2 \ddagger \alpha)}$ & 5.631e-02 & 1.095e-04 & 3.838e-09 & 2.481e-15 \\
\bottomrule
\end{tabular}
\caption{Estimates of $\Prob(M>\gamma)$ where $M=\max_i X_i$ and $\bfX \sim \NormDist_4(\bfzero_4, \bfSigma)$, $\rho = 0.75$.}
\label{table:norm_ests}
\end{table}

\vspace{-2em}

\begin{table}[H]
\centering
\begin{tabular}{ccccc}
\toprule
\multirow{2}{*}{Estimators} & \multicolumn{4}{c}{$\gamma$} \\
                            & 2    & 4    & 6    & 8    \\ 
\midrule
$\hat{\alpha}_0$ & 3.109e-03 & 4.075e-02 & 1* & 1* \\
\rowcolor{Gray} $\alphaMax$ & 6.154e-01 & 1.566e-01 & 2.822e-02 & 3.142e-03 \\
$\alphaMax{-}q$ & 2.899e-01 & 3.665e-02 & 2.827e-03 & 1.147e-04 \\
\rowcolor{Gray} $\hat{\alpha}_1$ & 2.977e-03 & 4.429e-02 & 2.822e-02* & 3.142e-03* \\
$\hat{\alpha}_2$ & 5.077e-03 & 1.839e-02 & 2.827e-03* & 1.147e-04* \\
\rowcolor{Gray} $\hat{\alpha}_1^{[1]}$ & 6.918e-04 & 4.639e-04 & 1.747e-04 & 2.192e-05 \\
$\hat{\alpha}_2^{[2]}$ & 7.838e-08 & 8.647e-05 & 1.237e-05 & 4.010e-08 \\
\rowcolor{Gray} $\widehat{(\beta_1 \ddagger \alpha)}$ & 6.564e-05 & 7.046e-05 & 6.227e-05 & 4.362e-05 \\
$\widehat{(\beta_2 \ddagger \alpha)}$ & 3.493e-04 & 1.593e-05 & 6.883e-06 & 3.340e-07 \\
\bottomrule
\end{tabular}
\caption{Absolute relative errors of the estimates in Table~\ref{table:norm_ests}.}
\end{table}

\vspace{-2em}

\begin{table}[H]
\centering
\begin{tabular}{ccccc}
\toprule
\multirow{2}{*}{Estimators} & \multicolumn{4}{c}{$\gamma$} \\
                            & 2    & 4    & 6    & 8    \\ 
\midrule
$\hat{\alpha}_0$ & 2.309e-01 & 1.068e-02 & 0 & 0 \\
\rowcolor{Gray} $\hat{\alpha}_1$ & 2.557e-01 & 5.099e-03 & 0 & 0 \\
$\hat{\alpha}_2$ & 1.885e-01 & 1.414e-03 & 0 & 0 \\
\rowcolor{Gray} $\hat{\alpha}_1^{[1]}$ & 2.817e-02 & 3.071e-05 & 4.650e-10 & 9.972e-17 \\
$\hat{\alpha}_2^{[2]}$ & 9.901e-03 & 4.244e-06 & 1.908e-11 & 8.575e-19 \\
\rowcolor{Gray} $\widehat{(\beta_1 \ddagger \alpha)}$ & 1.929e-02 & 2.089e-05 & 3.197e-10 & 6.994e-17 \\
$\widehat{(\beta_2 \ddagger \alpha)}$ & 1.306e-02 & 5.265e-06 & 2.310e-11 & 1.035e-18 \\
\bottomrule
\end{tabular}
\caption{Standard deviations of the estimates in Table~\ref{table:norm_ests}.}
\end{table}

\begin{table}[H]
\centering
\begin{tabular}{ccccc}
\toprule
\multirow{2}{*}{Estimators} & \multicolumn{4}{c}{$\gamma$} \\
                            & 6    & 8    & 10    & 12    \\ 
\midrule
$\alpha$ & 4.093e-04 & 2.435e-05 & 1.442e-06 & 8.526e-08 \\
\rowcolor{Gray} $\hat{\alpha}_0$ & 3.910e-04 & 2.000e-05 & 2.000e-06 & 0* \\
$\alphaMax$ & 4.130e-04 & 2.441e-05 & 1.443e-06 & 8.527e-08 \\
\rowcolor{Gray} $\alphaMax{-}q$ & 4.093e-04 & 2.435e-05 & 1.442e-06 & 8.526e-08 \\
$\hat{\alpha}_1$ & 4.120e-04 & 2.441e-05* & 1.443e-06* & 8.527e-08* \\
\rowcolor{Gray} $\hat{\alpha}_2$ & 4.093e-04* & 2.435e-05* & 1.442e-06* & 8.526e-08* \\
$\hat{\alpha}_1^{[1]}$ & 4.093e-04 & 2.435e-05 & 1.442e-06 & 8.526e-08 \\
\rowcolor{Gray} $\widehat{(\beta_1 \ddagger \alpha)}$ & 4.093e-04 & 2.435e-05 & 1.442e-06 & 8.526e-08 \\
\bottomrule
\end{tabular}
\caption{Estimates of $\Prob(M>\gamma)$ where $M=\max_i X_i$ and $\bfX \sim \LaplaceDist$, $d=4$.}
\label{table:laplace_ests}
\end{table}

\begin{table}[H]
\centering
\begin{tabular}{ccccc}
\toprule
\multirow{2}{*}{Estimators} & \multicolumn{4}{c}{$\gamma$} \\
                            & 6    & 8    & 10    & 12    \\ 
\midrule
$\hat{\alpha}_0$ & 4.472e-02 & 1.786e-01 & 3.873e-01 & 1* \\
\rowcolor{Gray} $\alphaMax$ & 8.959e-03 & 2.473e-03 & 6.987e-04 & 2.003e-04 \\
$\alphaMax{-}q$ & 8.067e-05 & 8.266e-06 & 8.757e-07 & 9.506e-08 \\
\rowcolor{Gray} $\hat{\alpha}_1$ & 6.516e-03 & 2.473e-03* & 6.987e-04* & 2.003e-04* \\
$\hat{\alpha}_2$ & 8.067e-05* & 8.266e-06* & 8.757e-07* & 9.506e-08* \\
\rowcolor{Gray} $\hat{\alpha}_1^{[1]}$ & 8.470e-06 & 1.023e-05 & 3.019e-05 & 1.577e-05 \\
$\widehat{(\beta_1 \ddagger \alpha)}$ & 4.515e-05 & 2.948e-05 & 2.151e-06 & 2.833e-06 \\
\bottomrule
\end{tabular}
\caption{Absolute relative errors of the estimates in Table~\ref{table:laplace_ests}.}
\end{table}

\begin{table}[H]
\centering
\begin{tabular}{ccccc}
\toprule
\multirow{2}{*}{Estimators} & \multicolumn{4}{c}{$\gamma$} \\
                            & 6    & 8    & 10    & 12    \\ 
\midrule
$\hat{\alpha}_0$ & 1.977e-02 & 4.472e-03 & 1.414e-03 & 0 \\
\rowcolor{Gray}  $\hat{\alpha}_1$ & 1.000e-03 & 0 & 0 & 0 \\
$\hat{\alpha}_2$ & 0 & 0 & 0 & 0 \\
\rowcolor{Gray} $\hat{\alpha}_1^{[1]}$ & 2.735e-05 & 8.581e-07 & 2.752e-08 & 8.189e-10 \\
$\widehat{(\beta_1 \ddagger \alpha)}$ & 1.937e-05 & 6.086e-07 & 1.908e-08 & 5.990e-10 \\
\bottomrule
\end{tabular}
\caption{Standard deviations of the estimates in Table~\ref{table:laplace_ests}.}
\end{table}

\newpage
\subsection{Discussion}

We begin with some trends which we expected to find in the results:
\begin{itemize}
	\item all estimators outperform crude Monte Carlo $\hat{\alpha}_0$,
	\item the estimators which calculate $\Prob(X_i > \gamma)$ outperform those which do not,
	\item the estimators which calculate $\Prob(X_i > \gamma, X_j > \gamma)$ outperform those which only use the univariate $\Prob(X_i > \gamma)$,
	\item the importance sampling estimators improve upon their original counterparts,
	\item the second-order IS improves upon the first-order IS.
\end{itemize}

\noindent
Also noticed in the performance of the $\hat{\alpha}$ estimators:
\begin{itemize}
	\item the $\hat{\alpha}_1$ and $\hat{\alpha}_2$ estimators often degenerated (i.e.\ had zero variance) to $\alphaMax$ and $\alphaMax{-}q$ respectively,
	\item the degeneration begin for smaller $\gamma$ when the $\bfX$ had a weaker dependence structure.
\end{itemize}
Table~\ref{table:ratio-alphas} shows the degeneration of the estimators in various examples involving multivariate normal distributions.

\begin{table}
\centering
\subfloat[$\hat{\alpha}_1$ to $\alphaMax$]{
\begin{tabular}{cc|cccc}
\toprule
\multicolumn{2}{c}{Test cases} & \multicolumn{4}{c}{$\gamma$} \\
$d$                    & $\rho$    & 2    & 4    & 6    & 8    \\
\midrule
\multirow{4}{*}{3}   & -0.25   & 0.00957 & 1* & 1* & 1*    \\
                     & 0       & 0.00255 & 1* & 1* & 1*     \\
                     & 0.5     & 0.00166 & 1* & 1* & 1*      \\
                     & 0.75    & 0.005 & 0.165 & 1* & 1*      \\
\midrule
\multirow{4}{*}{4}   & -0.25   & 0.00955 & 1* & 1* & 1*     \\
                     & 0       & 0.0185 & 1* & 1* & 1*     \\
                     & 0.5     & 0.00139 & 1* & 1* & 1*      \\
                     & 0.75    & 0.00484 & 0.283 & 1* & 1*   \\
\midrule
\multicolumn{2}{c}{Average} & 0.00663 & 0.806 & 1 & 1 \\
\bottomrule
\end{tabular}}
~~~~
\subfloat[$\hat{\alpha}_2$ to $\alphaMax{-}q$]{
\begin{tabular}{cc|cccc}
\toprule
\multicolumn{2}{c}{Test cases} & \multicolumn{4}{c}{$\gamma$} \\
$d$                    & $\rho$    & 2    & 4    & 6    & 8    \\
\midrule
\multirow{4}{*}{3}   & -0.25   &  1* & 1* & 1* & 1*    \\
                     & 0       & 0.151* & 1* & 1* & 1*     \\
                     & 0.5     & 0.0764 & 1* & 1* & 1*      \\
                     & 0.75    & 0.0172 & 0.754 & 1* & 1*      \\
\midrule
\multirow{4}{*}{4}   & -0.25   & 1* & 1* & 1* & 1*     \\
                     & 0       & 0.189 & 1* & 1* & 1*     \\
                     & 0.5     & 0.0153 & 1* & 1* & 1*      \\
                     & 0.75    & 0.0175 & 0.502 & 1* & 1*  \\
\midrule
\multicolumn{2}{c}{Average} & 0.308 & 0.907 & 1 & 1 \\
\bottomrule
\end{tabular}}
\caption{Ratios of absolute relative errors for pairs of estimators. Each row corresponds to a separate distribution for $\bfX$, each being $\NormDist_d$ distributed with standard normal marginals and constant correlation $\rho$.}
\label{table:ratio-alphas}
\end{table}

The fact that the estimators degenerate is not wholly undesirable, as they degenerate to the deterministic functions $\alphaMax$ and $\alphaMax{-}q$ which are highly accurate when degeneration occurs. Obviously, for very large $\gamma$ one would not resort to Monte Carlo methods as the asymptote $\alphaMax$ would be accurate enough for most purposes; one could use the $\hat{\alpha}$ estimators until the sample variance is below some threshold, then switch to the faster deterministic estimators $\alphaMax$ and $\alphaMax{-}q$. \\

\noindent
Regarding the $\widehat{(\beta_1 \ddagger \alpha)}$ and $\widehat{(\beta_2 \ddagger \alpha)}$ estimators:
\begin{itemize}
\item their performance is roughly the same as than their $\hat{\alpha}_1^{[1]}$ and $\hat{\alpha}_2^{[2]}$ counterparts,
\item they perform better when the dependence between the variables is weak.
\end{itemize}
One must remember that the $\hat{\beta}_i$ estimators are valid for a much larger class of problems (estimating expectations, not just probabilities). Also, we would expect that the $\hat{\beta}_i$-based estimators compare favorably to the $\hat{\alpha}_i^{[i]}$ IS-based estimators when $d$ is large, as the method involves no likelihood term which can degenerate.

\section{Conclusion}

In this paper we presented new estimators for the tail probability of a union of dependent rare events.
The key idea in both estimators is that the tail probability of the such a rare event can be
well approximated by the Bonferroni approximations:
\begin{align*}
  \alpha = \Prob(A) \approx \sum_{i=1}^k (-1)^{i-1} \sum_{|I|= i} \Prob\Big(\bigcap_{i\in I}A_i\Big) \text{ for } k=1,2\,.
\end{align*}

We provided conditions which ensure $\hat{\alpha}_1$ and $\hat{\beta}_i$ have logarithmic efficiency and bounded relative error. 
The estimators were tested on the classical example of rare maxima of random vectors.
Furthermore, we note the fact that our $\hat{\beta}_i$ estimators can be applied 
to a more general setting which could make useful for a larger variety of estimation problems.

\subsection{Future work}

In this paper we did not discuss stratification strategies for $\hat{\beta}_i$ that could result in further reductions in variance. Nor did we investigate which permutations of the $A_i$ minimise the variance of $\hat{\beta}_i$. Further investigation into the use of $\hat{\beta}_i$ to estimate tail probabilities of order statistics would be of value.

\section*{Acknowledgements}
 LRN is supported by ARC grant DE130100819.


\begin{thebibliography}{10}
\providecommand{\url}[1]{{#1}}
\providecommand{\urlprefix}{URL }
\expandafter\ifx\csname urlstyle\endcsname\relax
  \providecommand{\doi}[1]{DOI~\discretionary{}{}{}#1}\else
  \providecommand{\doi}{DOI~\discretionary{}{}{}\begingroup
  \urlstyle{rm}\Url}\fi

\bibitem{adler1990introduction}
Adler, R.J.: An Introduction to Continuity, Extrema, and Related Topics for
  General Gaussian Processes.
\newblock Lecture Notes-Monograph Series \textbf{12}, i--155 (1990)

\bibitem{adler2012efficient}
Adler, R.J., Blanchet, J.H., Liu, J., et~al.: Efficient {M}onte {C}arlo for
  high excursions of {G}aussian random fields.
\newblock The Annals of Applied Probability \textbf{22}(3), 1167--1214 (2012)

\bibitem{OnlineAccomp}
Andersen, L.N., Laub, P.J., Rojas-Nandayapa, L.: Online accompaniment for
  ``Efficient simulation for dependent rare events with applications to extremes'' (2016).
\newblock Available at \url{https://github.com/Pat-Laub/RareMaxima}

\bibitem{anderson1962introduction}
Anderson, T.W.: An Introduction to Multivariate Statistical Analysis, second
  edn.
\newblock Wiley Series in Probability and Mathematical Statistics: Probability
  and Mathematical Statistics. John Wiley \& Sons, Inc., New York (1984)

\bibitem{MR1990662}
Anderson, T.W.: An Introduction to Multivariate Statistical Analysis, third
  edn.
\newblock Wiley Series in Probability and Statistics. Wiley-Interscience [John
  Wiley \& Sons], Hoboken, NJ (2003)

\bibitem{RP2}
Asmussen, S., Albrecher, H.: Ruin Probabilities, second edn.
\newblock Advanced Series on Statistical Science \& Applied Probability, 14.
  World Scientific Publishing Co. Pte. Ltd., Hackensack, NJ (2010).
\newblock \doi{10.1142/9789814282536}.
\newblock \urlprefix\url{http://dx.doi.org/10.1142/9789814282536}

\bibitem{asmussen2007stochastic}
Asmussen, S., Glynn, P.W.: Stochastic Simulation: Algorithms and Analysis,
  \emph{Stochastic Modelling and Applied Probability series}, vol.~57.
\newblock Springer (2007)

\bibitem{berman1992sojourns}
Berman, S.: Sojourns and Extremes of Stochastic Processes.
\newblock CRC Press (1992)

\bibitem{bingham1989regular}
Bingham, N.H., Goldie, C.M., Teugels, J.L.: Regular Variation, vol.~27.
\newblock Cambridge university press (1989)

\bibitem{botev2017normal}
Botev, Z.I.: The normal law under linear restrictions: Simulation and
  estimation via minimax tilting.
\newblock Journal of the Royal Statistical Society, Series B \textbf{79}, pp.\
  1--24

\bibitem{breslaw1994random}
Breslaw, J.: Random sampling from a truncated multivariate normal distribution.
\newblock Applied Mathematics Letters \textbf{7}(1), 1--6 (1994)

\bibitem{bryc2012normal}
Bryc, W.: The Normal Distribution: Characterizations with Applications, vol.
  100.
\newblock Springer Science \& Business Media (2012)

\bibitem{charpentier2009tails}
Charpentier, A., Segers, J.: Tails of multivariate {A}rchimedean copulas.
\newblock Journal of Multivariate Analysis \textbf{100}(7), 1521--1537 (2009)

\bibitem{cont2010encyclopedia}
Cont, R.: Encyclopedia of Quantitative Finance  (2010)

\bibitem{de2007extreme}
De~Haan, L., Ferreira, A.: Extreme Value Theory: An Introduction.
\newblock Springer Science \& Business Media (2007)

\bibitem{eltoft2006multivariate}
Eltoft, T., Kim, T., Lee, T.W.: On the multivariate {L}aplace distribution.
\newblock IEEE Signal Processing Letters \textbf{13}(5), 300--303 (2006)

\bibitem{glasserman2003monte}
Glasserman, P.: Monte Carlo Methods in Financial Engineering, \emph{Stochastic
  Modelling and Applied Probability series}, vol.~53.
\newblock Springer (2003)

\bibitem{hashorva2005asymptotics}
Hashorva, E.: Asymptotics and bounds for multivariate {G}aussian tails.
\newblock Journal of Theoretical Probability \textbf{18}(1), 79--97 (2005)

\bibitem{hashorva2007asymptotic}
Hashorva, E.: Asymptotic properties of type {I} elliptical random vectors.
\newblock Extremes \textbf{10}(4), 175--206 (2007)

\bibitem{hashorva2010residual}
Hashorva, E.: On the residual dependence index of elliptical distributions.
\newblock Statistics \& Probability Letters \textbf{80}(13), 1070--1078 (2010)

\bibitem{heffernan2000directory}
Heffernan, J.E.: A directory of coefficients of tail dependence.
\newblock Extremes \textbf{3}(3), 279--290 (2000)

\bibitem{huang2003rare}
Huang, Z., Shahabuddin, P.: Rare-event, heavy-tailed simulations using hazard
  function transformations, with applications to value-at-risk.
\newblock In: Proceedings of the 2003 Winter Simulation Conference, 2003.
  (2003)

\bibitem{hult2002multivariate}
Hult, H., Lindskog, F., et~al.: Multivariate extremes, aggregation and
  dependence in elliptical distributions.
\newblock Advances in Applied Probability \textbf{34}(3), 587--608 (2002)

\bibitem{joe1997multivariate}
Joe, H.: Multivariate Models and Multivariate Dependence Concepts.
\newblock CRC Press (1997)

\bibitem{kotz2001asymmetric}
Kotz, S., Kozubowski, T.J., Podg{\'o}rski, K.: Asymmetric multivariate
  {L}aplace distribution.
\newblock In: The Laplace Distribution and Generalizations, pp. 239--272.
  Springer (2001)

\bibitem{ledford1996statistics}
Ledford, A.W., Tawn, J.A.: Statistics for near independence in multivariate
  extreme values.
\newblock Biometrika \textbf{83}(1), 169--187 (1996)

\bibitem{ledford1997modelling}
Ledford, A.W., Tawn, J.A.: Modelling dependence within joint tail regions.
\newblock Journal of the Royal Statistical Society: Series B (Statistical
  Methodology) \textbf{59}(2), 475--499 (1997)

\bibitem{ledford1998concomitant}
Ledford, A.W., Tawn, J.A., et~al.: Concomitant tail behaviour for extremes.
\newblock Advances in Applied Probability \textbf{30}(1), 197--215 (1998)

\bibitem{mcneil2015quantitative}
McNeil, A.J., Frey, R., Embrechts, P.: Quantitative Risk Management: Concepts,
  Techniques and Tools, 2nd edn.
\newblock Princeton University Press (2015)

\bibitem{nelsen2007introduction}
Nelsen, R.B.: An Introduction to Copulas, 2nd edn.
\newblock Springer Science \& Business Media (2007)

\bibitem{nolde2014geometric}
Nolde, N.: Geometric interpretation of the residual dependence coefficient.
\newblock Journal of Multivariate Analysis \textbf{123}, 85--95 (2014)

\bibitem{rausand2004system}
Rausand, M., H{\o}yland, A.: System Reliability Theory: Models, Statistical
  Methods, and Applications, vol. 396.
\newblock John Wiley \& Sons (2004)

\bibitem{resnick2002hidden}
Resnick, S.: Hidden regular variation, second order regular variation and
  asymptotic independence.
\newblock Extremes \textbf{5}(4), 303--336 (2002)

\bibitem{resnick1987extreme}
Resnick, S.I.: Extreme Values, Regular Variation, and Point Processes.
\newblock Springer (1987)

\bibitem{robert1995simulation}
Robert, C.P.: Simulation of truncated normal variables.
\newblock Statistics and computing \textbf{5}(2), 121--125 (1995)

\bibitem{rubinstein2011simulation}
Rubinstein, R.Y., Kroese, D.P.: Simulation and the Monte Carlo method, vol.
  707.
\newblock John Wiley \& Sons (2011)

\bibitem{savage1962mills}
Savage, I.R.: Mills’ ratio for multivariate normal distributions.
\newblock J. Res. Nat. Bur. Standards Sect. B \textbf{66}, 93--96 (1962)

\bibitem{sibuya1960bivariate}
Sibuya, M.: Bivariate extreme statistics.
\newblock Annals of the Institute of Statistical Mathematics \textbf{11}(2),
  195--210 (1960)

\end{thebibliography}

\def\cprime{$'$} \def\cprime{$'$}

\appendix

\section{Elliptical distribution asymptotics} \label{scn:elliptical_asymptotics}

\subsection{Asymptotic properties of normal distributions}

In general, for an $\bfX \sim \NormDist_d(\bfzero,\bfSigma)$, Theorem 2.6.1 of Bryc \cite{bryc2012normal} states that for all measurable $A \subset \RL^d$ the
\begin{equation} \label{bryc}
\lim_{n \to \infty} \frac1{n^2} \log \Prob(\bfX \ge nA) = - \inf_{\bfx \in A} \frac12 \bfx^\top \bfSigma^{-1} \bfx.
\end{equation}
The asymptotic properties of elliptical distributions also relate to this quadratic programming problem, which Hashorva \cite{hashorva2005asymptotics,hashorva2007asymptotic} denotes as
\begin{equation} \label{quad_prog}
\mathcal{P}(\bfSigma^{-1}, \bft) := \text{minimise } \bfx^\top \bfSigma^{-1} \bfx \text{ under the linear constraint } \bfx \ge \bft.
\end{equation}
The program $\mathcal{P}(\bfSigma^{-1}, \bft)$ is usually minimised at the boundary $\bft$, and hence the asymptotic form \eqref{bryc} is very simple. This occurs when
$\bfSigma^{-1} \bft > \bfzero$ (componentwise), a condition often called the \emph{Savage condition} after Richard Savage \cite{savage1962mills}. For the cases when the Savage condition fails, the asymptotics change as some components of $\bfX$ become irrelevant in the limit. Figure~\ref{fig:savage_condition} graphically shows some contours of $\bfx^\top \bfSigma^{-1} \bfx$ for some $\bfSigma$ which do and do not satisfy the Savage condition.

\begin{figure}[H]
\centering
\includegraphics[width=0.3\textwidth]{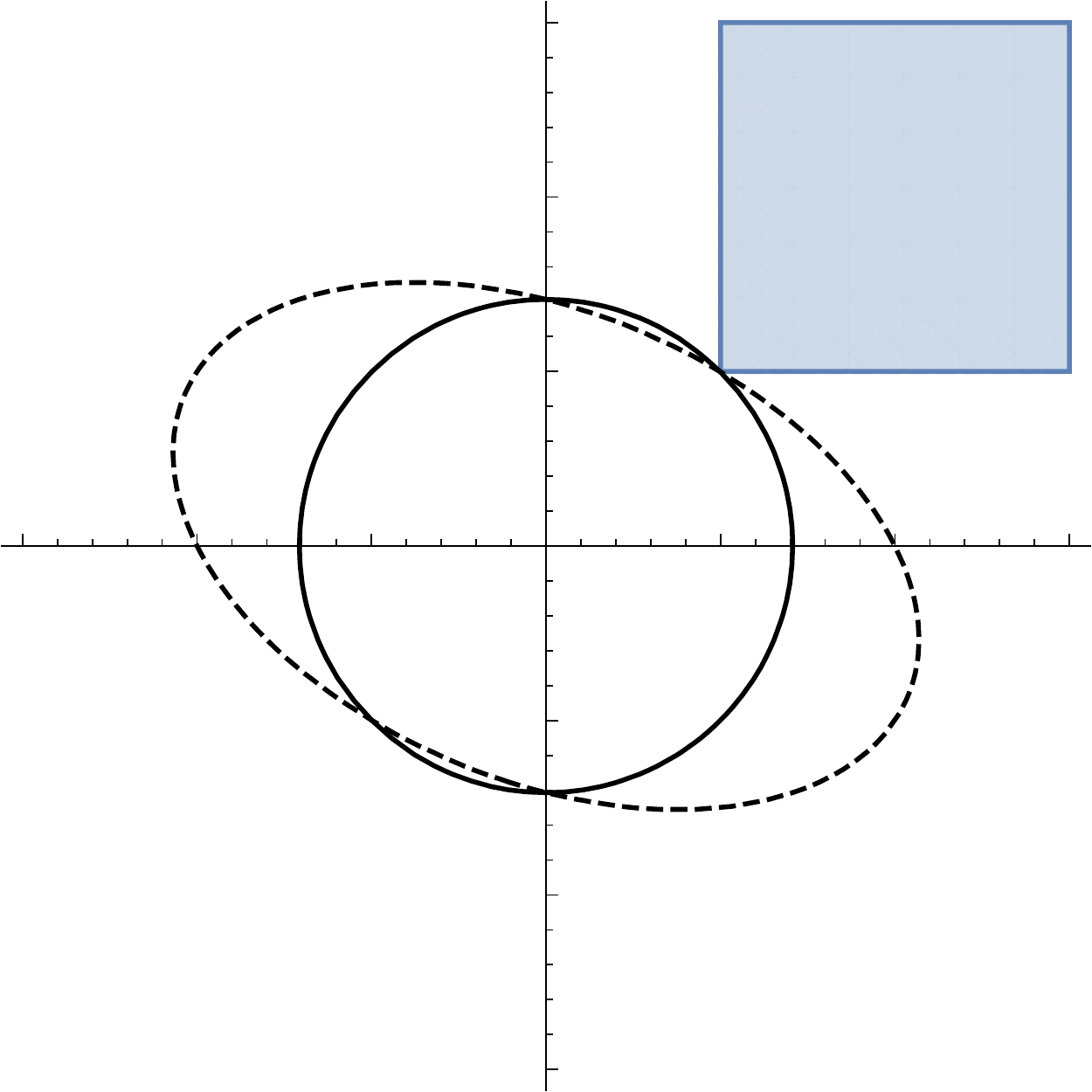}~~
\includegraphics[width=0.3\textwidth]{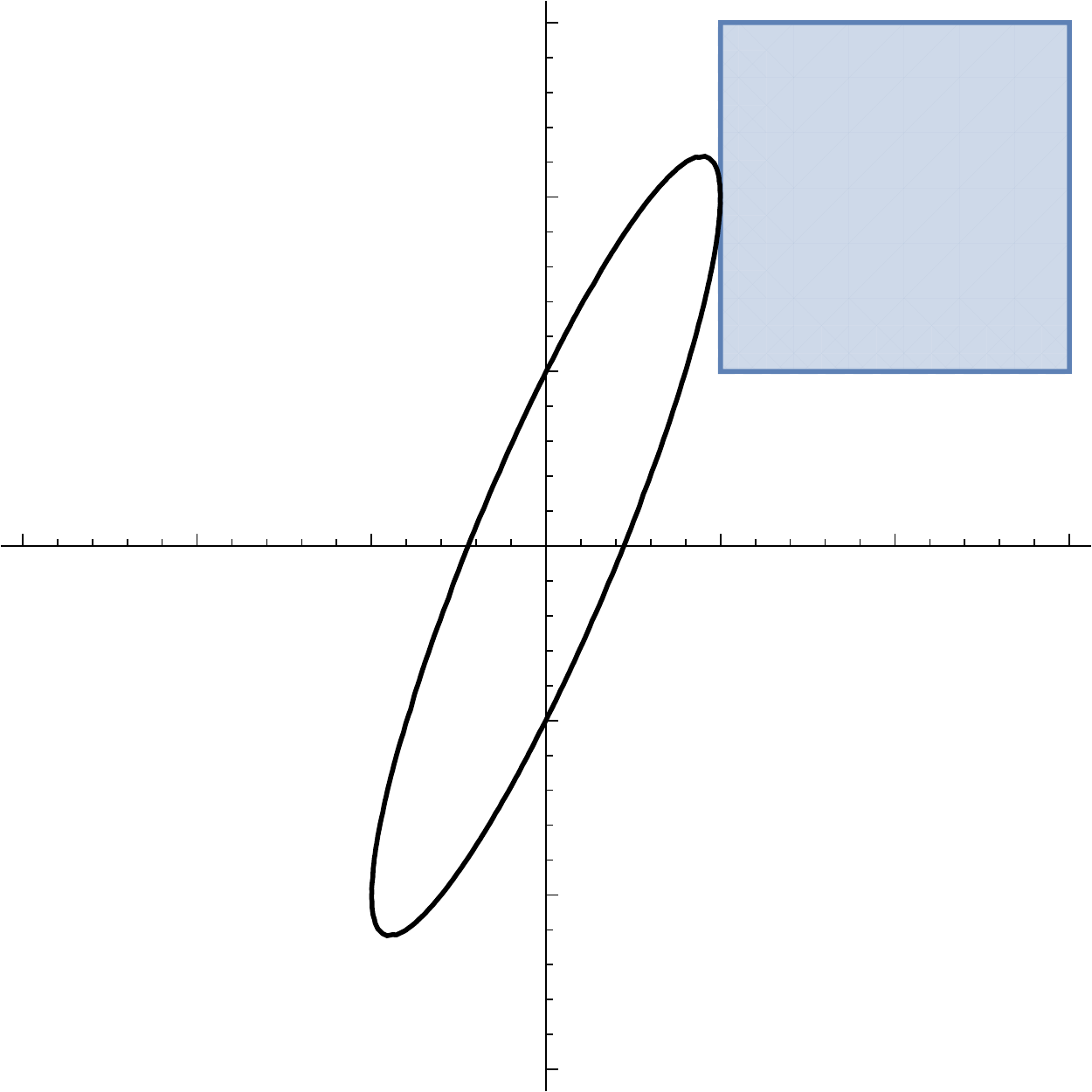}~~
\includegraphics[width=0.3\textwidth]{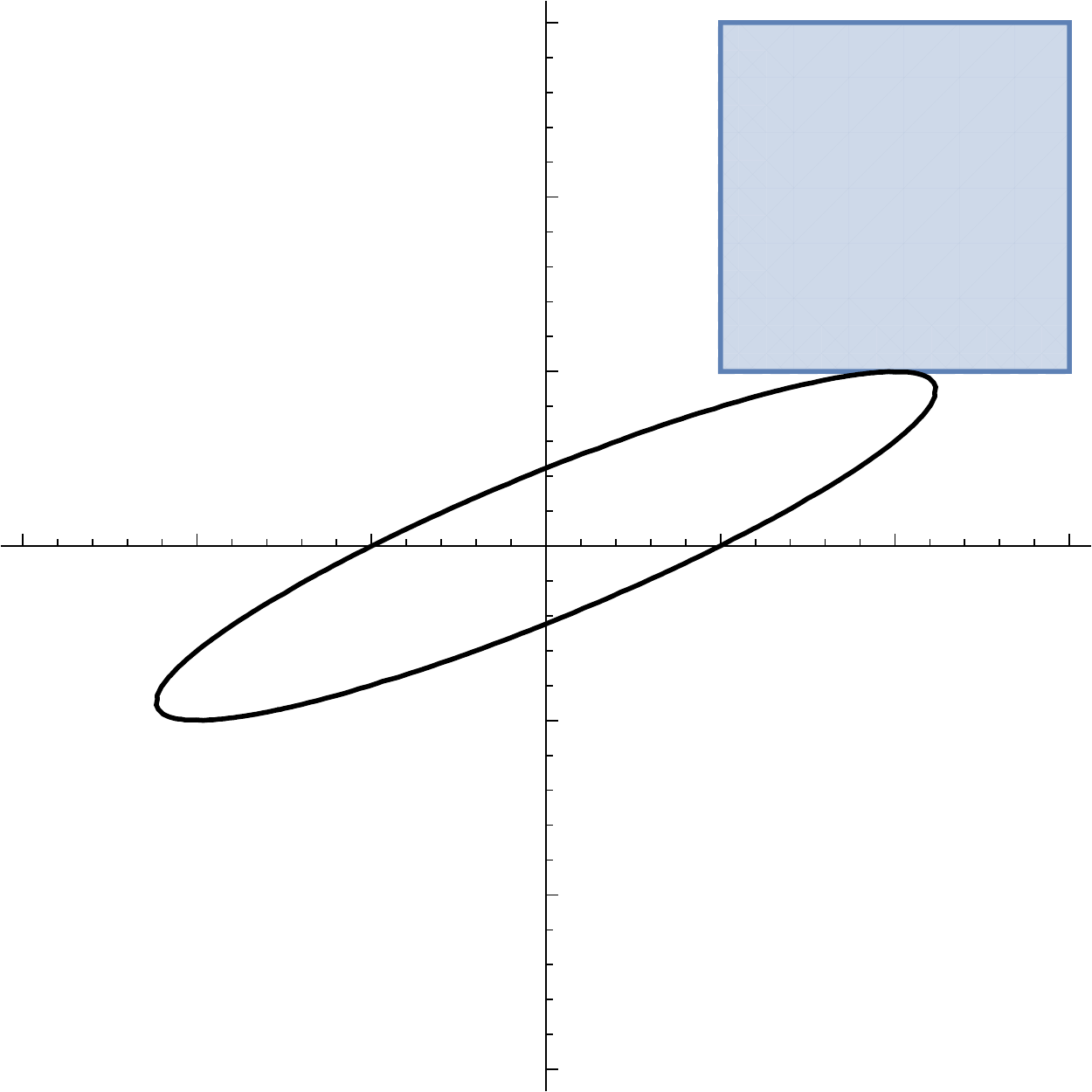}
\caption{Contours of $\bfx^\top \bfSigma^{-1} \bfx$ for example $\bfSigma$ which: (a) satisfy the Savage condition (i.e., $\bfSigma^{-1} \bfone > \bfzero$), and (b)--(c) do not satisfy the condition. The covariance matrices, in \textsc{Matlab} notation, are: (a) $\bfSigma = \bfI$ and  $\bfSigma =[2, -1/2; -1/2, 1]$, (b) $\bfSigma = [1, 2; 2, 5]$, and (c) $\bfSigma = [5, 2; 2, 1]$.}
\label{fig:savage_condition}
\end{figure}

\subsection{Asymptotic properties of type I elliptical distributions}

Take $\bfX \sim \EllDist(\bfmu, \bfSigma, F)$ where the radial distribution $F \in$  MDA(Gumbel) has support $(0,x_F)$, for some $x_F \in \overline{\RL}$, and where $\{\sigma_1, \dots, \sigma_d\}$ are in decreasing order. 
The univariate and bivariate asymptotics, $\Prob(X_i > \gamma)$ and $\Prob(X_i > \gamma, X_j > \gamma)$, can be written in terms of the scaling function $w(\gamma)$ and of $\Ftail((\gamma-\mu)/\kappa)$ for some particular $\mu$ and $\kappa$.
Theorem 12.3.1 of Berman \cite{berman1992sojourns} gives the univariate case,
\begin{equation} \label{berman}
 \Prob(X_i > \gamma) =  (1+\oh(1)) \frac{ \Ftail(\upsilon_i(\gamma)) }
 {\sqrt{2\pi \upsilon_i(\gamma) w(\upsilon_i(\gamma)) }} \quad \text{ as } \gamma \to x_F
\end{equation}
where $\upsilon_i(\gamma) = (\gamma - \mu_i)/\sigma_i$. The bivariate case, i.e.\ $\Prob(X_i > \gamma, X_j > \gamma)$, relies on the following constants. Define $a_{ij} := \sigma_j/\sigma_i$. If $\rho_{ij} \ge a_{ij}$ then define
 \[ \mu_{ij}:= \mu_j \quad \text{ and } \quad \kappa_{ij}:= \sigma_j \]
 otherwise for $\rho_{ij} < a_{ij}$
 \[ \mu_{ij}:= \frac{\mu_i-a_{ij}\rho_{ij}(\mu_1+\mu_2)+a^2\mu_j}{\alpha_{ij}(1-\rho^2_{ij})} \quad \text{ and } \quad \kappa_{ij}:= \frac{\sigma_i^2\sigma_j^2(1-\rho^2_{ij})}{ {\sigma_i^2-2\rho_{ij}\sigma_i\sigma_j + \sigma_{j}^2}}. \]

\begin{Theorem} \label{thm:type_1_asymptotics}
 Let $(X_i, X_j)$ be a pair from a type I elliptical random vector $\bfX \sim E(\bfmu, \bfSigma, F)$ 
 and consider $\gamma \nearrow x_F$.
Then with $\upsilon_{ij}(\gamma)=(\gamma-\mu_{ij})/\kappa_{ij}+c_{ij}(\gamma)$ for some $c_{ij}(\gamma) \in \oh(1)$,
 \begin{align*}
   \Prob(X_i > \gamma, X_j > \gamma)
   &= \Ftail(\upsilon_{ij}(\gamma)) \times \begin{cases}
   		 \Big( 2\pi \upsilon_{ij}(\gamma) w(\upsilon_{ij}(\gamma)) \Big)^{-1/2}(1 + \oh(1)),
   		 & \text{ if } \rho_{ij} > a_{ij}, \\
         \Big( 2\pi \upsilon_{ij}(\gamma) w(\upsilon_{ij}(\gamma)) \Big)^{-1}(C_{a,\rho} + \oh(1)),
         & \text{ if } \rho_{ij} < a_{ij},
   	\end{cases}
 \end{align*}
 for a $C_{a,\rho} \in \RL_+$.
Furthermore, if either $\mu_i\ge\mu_j$ or $\lim_{\gamma\to x_F}w(\gamma)/\gamma<\infty$, then there exists a $C_\rho \in \RL_+$ such that
 \begin{align*}
   \Prob(X_i > \gamma, X_j > \gamma)
   &= \Ftail(\upsilon_{ij}(\gamma)) \Big( 2\pi \upsilon_{ij}(\gamma) w(\upsilon_{ij}(\gamma)) \Big)^{-1/2}(C_\rho + \oh(1)),
   		 & \text{ if } \rho_{ij} = a_{ij}.
 \end{align*}
\end{Theorem}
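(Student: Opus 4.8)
The plan is to work from the stochastic representation \eqref{ellip_distr} restricted to the pair $(X_i, X_j)$ and to reduce the bivariate tail to a one–parameter Laplace-type integral over the radial component. First I would record that the subvector $(X_i, X_j)$ is again elliptical, with $2\times 2$ dispersion matrix $\bfSigma_{ij}$ (the corresponding submatrix of $\bfSigma$), mean $(\mu_i,\mu_j)$, and new radial variable $R_2 \eqdistr R\sqrt{B}$ where $B\sim\mathrm{Beta}(1,(d-2)/2)$ is independent of $R$. Since multiplication by the bounded factor $\sqrt B$ preserves membership in MDA(Gumbel) together with the scaling function $w$ (cf.\ Hashorva \cite{hashorva2007asymptotic}), it suffices to treat the genuinely bivariate problem with radial tail $\Ftail$ and scaling $w$.

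Writing $\bfy=(y_i,y_j)$ for the centred variables and $Q_\gamma=\{y_i>\gamma-\mu_i,\ y_j>\gamma-\mu_j\}$, the event of interest becomes $\{R_2\,\bfC_{ij}U\in Q_\gamma\}$ with $U$ uniform on the unit circle and $\bfC_{ij}\bfC_{ij}^\top=\bfSigma_{ij}$. The geometry is governed by the quadratic program \eqref{quad_prog}: the smallest radius for which the ellipse $\{\|\bfC_{ij}^{-1}\bfy\|=r\}$ meets $Q_\gamma$ is exactly $\upsilon_{ij}(\gamma)=\sqrt{\mathcal{P}(\bfSigma_{ij}^{-1},(\gamma-\mu_i,\gamma-\mu_j))}$, and a direct computation identifies this with $(\gamma-\mu_{ij})/\kappa_{ij}$ for the constants defined above — indeed $\kappa_{ij}^2=\sigma_i^2\sigma_j^2(1-\rho_{ij}^2)/(\sigma_i^2-2\rho_{ij}\sigma_i\sigma_j+\sigma_j^2)$ is precisely the reciprocal of the minimum quadratic value per unit threshold. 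Conditioning on $R_2=r$, integrating the uniform angular measure over $\{u:r\bfC_{ij}u\in Q_\gamma\}$, and then integrating against the MDA(Gumbel) tail of $R_2$ reduces everything to a Laplace expansion around $r=\upsilon_{ij}(\gamma)$, which yields the factor $\Ftail(\upsilon_{ij}(\gamma))$ together with a polynomial prefactor determined by the local geometry of $Q_\gamma$ at the minimiser.

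The three cases correspond to which constraints are active at the minimiser, i.e.\ to whether the Savage condition holds. Because the $\sigma$'s are in decreasing order, $\bfSigma_{ij}^{-1}(\gamma-\mu_i,\gamma-\mu_j)^\top>\bfzero$ reduces to $\rho_{ij}<a_{ij}$: in this regime the minimiser sits at the corner, both constraints bind, and a two–dimensional Laplace expansion contributes two $1/\sqrt{\,\cdot\,}$ factors, giving the $\bigl(2\pi\upsilon_{ij}w(\upsilon_{ij})\bigr)^{-1}$ prefactor with constant $C_{a,\rho}$. When $\rho_{ij}>a_{ij}$ the Savage condition fails: the minimiser of the quadratic subject only to $y_j\ge\gamma-\mu_j$ already satisfies $y_i>\gamma-\mu_i$, so only the smaller-variance constraint binds, whence $\upsilon_{ij}=\upsilon_j$, the transverse direction is integrated freely, and a single $1/\sqrt{\,\cdot\,}$ factor survives — exactly the univariate Berman form \eqref{berman}. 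The stated $\bigl(2\pi\upsilon_{ij}w(\upsilon_{ij})\bigr)^{-1/2}$ prefactor and its constant then follow from the same MDA(Gumbel) radial asymptotics that prove \eqref{berman}.

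The delicate case is the boundary $\rho_{ij}=a_{ij}$, where the minimiser is still the corner but the Hessian of the quadratic form degenerates along the binding edge, so the Laplace expansion is no longer standard; the leading contribution is intermediate between the previous two, of order $\bigl(2\pi\upsilon_{ij}w(\upsilon_{ij})\bigr)^{-1/2}$ with a new constant $C_\rho$. The supplementary hypothesis (either $\mu_i\ge\mu_j$ or $\lim_{\gamma\to x_F}w(\gamma)/\gamma<\infty$) is precisely what prevents the sub-leading drift along the flat direction from diverging or collapsing, thereby pinning down $C_\rho\in\RL_+$. I expect this degenerate boundary analysis — controlling the flat-direction integral uniformly in $\gamma$ and verifying that the error terms $c_{ij}(\gamma)$ are $\oh(1)$ — to be the main obstacle, while the non-degenerate cases are routine once the reduction to the quadratic program \eqref{quad_prog} and the MDA(Gumbel) radial expansion are in place.
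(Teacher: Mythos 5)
Your route is genuinely different from the paper's. The paper does not re-derive the bivariate tail asymptotics at all: it invokes Theorem~2 of Hashorva \cite{hashorva2007asymptotic} as a black box, and the entire proof consists of checking which case of that theorem applies by computing $\lim_{\gamma\to x_F}\sqrt{w(\upsilon_j(\gamma))/\upsilon_j(\gamma)}\,(\upsilon_i(\gamma)-\rho_{ij}\upsilon_j(\gamma))$ --- this limit is $-\infty$ when $a_{ij}<\rho_{ij}$, equals $\sqrt{w(\upsilon_j)/\upsilon_j}\,(\mu_j-\mu_i)/\sigma_i$ when $a_{ij}=\rho_{ij}$ (whence the supplementary hypothesis), and in the remaining case one only needs the algebraic identity $\upsilon_j(\gamma)\tau_{ij}(\gamma)=\upsilon_{ij}(\gamma)$. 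You instead propose to re-prove Hashorva's result from the representation \eqref{ellip_distr} by a Laplace expansion over the radial component. Your geometric skeleton is correct and genuinely illuminating: the identification of $\upsilon_{ij}$ with the value of the program \eqref{quad_prog}, the equivalence of the Savage condition with $\rho_{ij}<a_{ij}$ (which I verified: $\bfSigma_{ij}^{-1}\bfone>\bfzero$ reduces to $\sigma_j>\rho_{ij}\sigma_i$ given $\sigma_i\ge\sigma_j$), and the reading of the three prefactors as corner/edge/degenerate Laplace contributions all check out, and your heuristic for the role of the hypothesis at $\rho_{ij}=a_{ij}$ matches what the paper's limit computation actually controls.

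There is, however, a concrete gap in your reduction step. The bivariate marginal of $\bfX$ has radial variable $R\sqrt{B}$ with $B\sim\mathrm{Beta}(1,(d-2)/2)$, and its tail is $\Exp[\Ftail(x/\sqrt{B})]$, \emph{not} $\Ftail(x)$: for $F$ in MDA(Gumbel) these differ by a factor that is regularly varying in $x\,w(x)$ of order $(x\,w(x))^{-(d-2)/2}$ (e.g.\ for the standard normal, $\chi_d$ versus $\chi_2$ tails differ by $\propto x^{d-2}$). Since the theorem's conclusion is an exact asymptotic with prefactors $(2\pi\upsilon_{ij}w(\upsilon_{ij}))^{-1/2}$ and $(2\pi\upsilon_{ij}w(\upsilon_{ij}))^{-1}$ expressed in terms of the \emph{original} $\Ftail$ and $w$, the discrepancy sits exactly at the order of accuracy you are trying to establish; ``it suffices to treat the genuinely bivariate problem with radial tail $\Ftail$'' is therefore false as stated for $d>2$, and you must either work in the full $d$-dimensional angular decomposition (where the $d-2$ transverse angular integrations supply the compensating factors, as in the derivation of \eqref{berman}) or carry the marginal radial tail explicitly and convert back at the end. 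Beyond that, the analytic heart of the argument --- the uniform-in-$\gamma$ Laplace expansions, the constants $C_{a,\rho}$ and $C_\rho$, the degenerate expansion at $\rho_{ij}=a_{ij}$, and the verification that $c_{ij}(\gamma)=\oh(1)$ --- is precisely the content of the cited Hashorva theorem and is deferred rather than executed in your sketch; as it stands the proposal is a plausible programme for re-proving that theorem, not a proof of the statement.
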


\begin{proof}

Use Theorem 2 of Hashorva \cite{hashorva2007asymptotic}.
First we consider the case $a_{ij}<\rho_{ij}$.  In such a case it holds that
\begin{align*}
 \lim_{\gamma\to x_F} \sqrt{\frac{w(\upsilon_j(\gamma))}{\upsilon_j(\gamma)}}\left(\upsilon_i(\gamma)-\rho_{ij}\upsilon_j(\gamma)\right)
 &=\lim_{\gamma\to x_F} \sqrt{{w(\upsilon_j(\gamma))\upsilon_j(\gamma)}}\left(\frac{\upsilon_i(\gamma)}{\upsilon_j(\gamma)}-\rho_{ij}\right)\\
 &=\lim_{\gamma\to x_F} \sqrt{{w(\upsilon_j(\gamma))\upsilon_j(\gamma)}}\left(a_{ij}-\rho_{ij}\right)=-\infty.
\end{align*}
Hence, the hypotheses of Case i) of Theorem 2 of Hashorva \cite{hashorva2007asymptotic} hold and the first result follows.
In the case where $a_{ij}=\rho_{ij}$ then
\[
 \lim_{\gamma\to x_F} \sqrt{\frac{w(\upsilon_j(\gamma))}{\upsilon_j(\gamma)}}\left(\upsilon_i(\gamma)-\rho_{ij}\upsilon_j(\gamma)\right)
 =\lim_{\gamma\to x_F} \sqrt{\frac{w(\upsilon_j(\gamma))}{\upsilon_j(\gamma)}}\frac{(\mu_j-\mu_{i})}{\sigma_i}.
\]
The last limit remains bounded from above if either $\mu_i>\mu_j$ or $\lim_{\gamma\to\infty}w(\gamma)/\gamma<\infty$.
For the case $a_{ij}>\rho_{ij}$ we define
$a_{ij}(\gamma) := \upsilon_i(\gamma) / \upsilon_j(\gamma)$ so $\lim_{\gamma \to \infty} a_{ij}(\gamma)=a_{ij}$.

We let
 \[
   \tau_{ij}(\gamma) =
   \sqrt{ \frac{1-2a_{ij}(\gamma)\rho_{ij} + a_{ij}^2(\gamma)}{1-\rho^2_{ij}}},\quad
  \tau_{ij}:=\lim_{\gamma\to\infty}\tau_{ij}(\gamma)
    =\sqrt{ \frac{1-2a_{ij}\rho_{ij} + a_{ij}^2}{1-\rho^2_{ij}}}.
\]
The results follows by noting that
\[
 \upsilon_j(\gamma)\tau_{ij}(\gamma)= \upsilon_{ij}(\gamma),\qquad
 \upsilon_{ij}(\gamma)= \frac{\gamma-\mu_{ij}}{\tau_{ij}}+\oh(1).
\]
\qed
\end{proof}

\end{document}